\tikzset{vtx/.style={circle, fill, inner sep=1.5pt}}
\tikzset{openvtx/.style={circle, draw, inner sep=1.5pt}}
\crefname{obs}{Observation}{Observations}
\newlist{defitemize}{itemize}{1}
\setlist[defitemize,1]{
leftmargin=20pt, label=$\bullet$\hspace*{5pt},
parsep=0pt, partopsep=0pt,
itemsep=3pt, topsep=5pt,
}
\newlist{myproofsteps}{enumerate}{1}
\setlist[myproofsteps,1] {
label=Step \arabic*.,
parsep=0pt, partopsep=0pt,
itemsep=\medskipamount, topsep=5pt,
align=left,left=0pt,
labelsep=-0.15in,
}
\newenvironment{proofsteps}{
\begin{myproofsteps}
\setlength{\parindent}{0.25in}
\newcommand{\step}[1]{\item[Step ##1.]\hspace*{0.15in}}
}
{\end{myproofsteps}}
\newlist{myalgsteps}{enumerate}{1}
\setlist[myalgsteps,1] {
label=Step \arabic*.,
parsep=0pt, partopsep=0pt,
itemsep=2pt, topsep=5pt,
align=left,left=0pt,
}
\newenvironment{algsteps}{
\begin{myalgsteps}
\newcommand{\step}[1]{\item[Step ##1.]}
}
{\end{myalgsteps}}
\numberwithin{equation}{section}
\newtheorem{thm}{Theorem}[section]
\newtheorem{lem}[thm]{Lemma}
\newtheorem{quest}[thm]{Question}
\newtheorem{conj}[thm]{Conjecture}
\newtheorem{prop}[thm]{Proposition}
\newtheorem{obs}[thm]{Observation}
\theoremstyle{definition}
\newtheorem{defn}[thm]{Definition}
\newtheorem{rem}[thm]{Remark}
\definecolor{cbpink}{HTML}{DC267F}
\newcommand{\Z}{\mathbb{Z}}
\newcommand{\A}{\mathcal{A}}
\newcommand{\B}{\mathcal{B}}
\newcommand{\ex}{\mathrm{ex}}
\newcommand{\dist}{\mathrm{dist}}
\newcommand{\sm}{\setminus}
\newcommand{\sub}{\subseteq}
\newcommand{\del}{\delta}
\renewcommand{\c}[1]{\mathcal{#1}}
\newcommand{\lfl}{\lfloor}
\newcommand{\rfl}{\rfloor}
\newcommand{\lcl}{\lceil}
\newcommand{\rcl}{\rceil}
\DeclarePairedDelimiter{\abs}{\lvert}{\rvert}
\DeclarePairedDelimiter{\floor}{\lfloor}{\rfloor}
\DeclarePairedDelimiter{\ceil}{\lceil}{\rceil}
\newcommand{\zbad}{z_{\mathrm{bad}}}
\newcommand{\coox}{\vec q}
\title{Rainbow Trees in Hypercubes}
\author{ Nicholas Crawford\thanks{Los Medanos {\texttt{ncrawford@losmedanos.edu}}} \and Maya Sankar\thanks{Stanford University {\texttt{mayars@stanford.edu}}} \and Carl Schildkraut\thanks{Stanford University {\texttt {carlsch@stanford.edu}}}\and Sam Spiro\thanks{Georgia State University {\texttt {sspiro@gsu.edu}}.}}
\date{\today}
\begin{document}

\maketitle

\begin{abstract}
We prove that every proper edge-coloring of the $n$-dimensional hypercube $Q_n$ contains a rainbow copy of every tree $T$ on at most $n$ edges.  This result is best possible, as $Q_n$ can be properly edge-colored using only $n$ colors while avoiding rainbow cycles.
\end{abstract}

\section{Introduction}

One of the most well-studied graphs is the \textit{$n$-dimensional hypercube} $Q_n$, whose vertex set consists of elements of $\mathbb{F}_2^n$, with two vertices adjacent if and only if they differ in exactly one coordinate.

Motivated by problems related to Schrijver's Conjecture and the rainbow Tur\'an problem (which we define in greater detail below), the first author, King, and the fourth author \cite{cks-25} initiated the study of which rainbow subgraphs can appear in a properly edge-colored subgraphs of the hypercube. Specifically, the simplest case of the main conjectures made in \cite{cks-25} is the following, where here a \textit{rainbow} subgraph of an edge-colored graph $G$ is a subgraph of $G$ where every edge has a distinct color.

\begin{conj}[{\cite{cks-25}}]\label{main conjecture}
Every proper edge-coloring of the $n$-dimensional hypercube contains a rainbow copy of every tree $T$ with at most $n$ edges.
\end{conj}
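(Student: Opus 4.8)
The plan is to prove \Cref{main conjecture} by induction on $n$, using the decomposition $Q_n = Q^0 \cup Q^1 \cup M$, where $Q^0,Q^1\cong Q_{n-1}$ are the subcubes obtained by fixing the last coordinate and $M$ is the perfect matching joining them. The base cases $n\le 2$ are checked by hand (for $n=2$, any two adjacent edges of $C_4$ receive distinct colors, so every tree on $\le 2$ edges is automatically rainbow). For the inductive step, if $T$ has at most $n-1$ edges then the coloring restricted to $Q^0$ is proper and the inductive hypothesis already gives a rainbow copy of $T$ inside $Q^0$. So everything comes down to the tight case $|E(T)|=n$, which is the only place the hypothesis $|E(T)|\le n$ is used sharply.

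For the tight case I would break $T$ across the matching: pick a leaf $\ell$ of $T$ with neighbor $p$, set $T'=T-\ell$ (a tree with exactly $n-1$ edges), and look for a rainbow copy of $T'$ in $Q^0$ in which $p$ maps to a vertex $v0$ with the property that the matching color $\chi(v0,v1)$ does not occur among the $n-1$ colors of the copy; mapping $\ell$ to $v1$ then completes a rainbow copy of $T$. To control which vertex plays the role of $v0$, I would strengthen the induction to also allow prescribing the image of one designated vertex of $T$: \emph{every proper edge-coloring of $Q_m$ contains a rainbow copy of every tree $S$ with $\le m$ edges sending any chosen vertex of $S$ to any chosen vertex of $Q_m$}. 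The subcube reduction above survives this strengthening verbatim, and in the tight case it produces, for each vertex $v$ of $Q_{n-1}$, a rainbow copy $F_v$ of $T'$ with $p\mapsto v0$; the task is then to find a $v$ with $\chi(v0,v1)\notin\chi(E(F_v))$.

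There is one clean situation where this is immediate: if $Q^0$ is colored with exactly $n-1$ colors, then each of its vertices meets all $n-1$ colors, so properness forces $\chi(v0,v1)$ to be a color not appearing in $Q^0$ at all, and any $v$ works. In general there are ``extra'' colors in play, and one would like to cash in this slack — for instance by further allowing the induction to forbid a bounded set of colors, while respecting the hard constraint that a tree with exactly $m$ edges in $Q_m$ must use all colors (so one should cut $T$ into two subtrees each with at most $n-2$ edges, which is possible once $n\ge3$, embed one piece in each of $Q^0,Q^1$, and glue along a matching edge). The recurring difficulty, and what I expect to be the main obstacle, is that the pieces come out of the recursion with no control on their color sets, the matching edge used to glue them contributes a color we cannot choose, and a naive accounting of forbidden colors is exactly one over the budget $n$. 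Resolving this should require genuinely exploiting the freedom in where $T$ is cut and where the glue vertex sits — e.g.\ showing there are many rainbow copies of each piece, spread widely enough over $Q_{n-1}$ that some pair is compatible with a good matching edge — rather than making a single greedy choice.

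Finally, it is worth recording why one cannot simply embed $T$ greedily one edge at a time: when the last vertex is added, its parent's image in $Q_n$ already has as many neighbors occupied as the parent's degree in $T$, which together with the up to $n-1$ forbidden colors can block all $n$ directions at that vertex. The across-the-matching split is a way of sidestepping this last-edge failure; any direct edge-by-edge approach would instead have to build in a local repair step for the final edge.
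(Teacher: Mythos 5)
Your proposal is a plan rather than a proof, and the hole it leaves open is exactly the crux of the problem. In the tight case $e(T)=n$, everything hinges on gluing two recursively-found rainbow pieces across the matching $M$, but the inductive hypothesis gives you no control whatsoever over which $n-1$ (resp.\ $a$ and $b$) colors the pieces use, and, as you yourself compute, the naive color accounting is one over budget: the two subtrees contribute $n-1$ colors you cannot influence and the matching edge contributes one more you cannot choose. The proposed fixes do not close this. The strengthened induction with a prescribed vertex image is itself an unproved (and far from obvious) statement whose hard case is again the tight one, so the induction is circular at precisely the point where it is needed; and the variant in which one additionally forbids a bounded set of colors is impossible in the tight case, since (as you note) a rainbow tree with $m$ edges in an $m$-colored proper coloring of $Q_m$ must use every color, so no color can be forbidden. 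The suggestion that one should instead show the pieces admit ``many, widely spread'' rainbow copies is the real content that is missing: no mechanism is given for producing such a spread, and a single application of the inductive hypothesis yields only one copy per prescribed vertex, with no handle on its color set. So as written the argument does not go through for $e(T)=n$, which is the only case at stake.

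For contrast, the paper does not induct on the dimension or split across a matching at all. It works inside $Q_\infty$ (only the minimum degree matters), first greedily builds a \emph{doubly distinct} homomorphism of $\lfloor T/2\rfloor$ --- rainbow and with all edge coordinates distinct, which is affordable because $2e(\lfloor T/2\rfloor)\le e(T)$ by \cref{char tree attempt 2} --- and then proves a technical extension theorem (\cref{patch-main-thm}, by induction on the height of the tree) showing any such partial map extends to a rainbow embedding of all of $T$. Injectivity, which is the failure mode your ``greedy with a repair step'' paragraph worries about, is certified not by color bookkeeping but by the coordinate observation \cref{fact:distinct}: a walk in the hypercube whose edges use more than half as many distinct coordinates as its length cannot return to its start. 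Reserving coordinate budget along root-to-leaf paths, rather than reserving colors across subcubes, is what lets the counting come in exactly at the degree bound instead of one over it.
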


\cref{main conjecture} is best possible, in the sense that if $H$ is not a tree with at most $n$ edges, then $Q_n$ possesses an edge-coloring without a rainbow copy of $H$. Indeed, consider the Cayley edge-coloring of $Q_n$, i.e., the coloring where an edge $uv$ whose vertices differ on their $i$th coordinates is given color $i$. This coloring has only $n$ colors and no rainbow cycles.

\Cref{main conjecture} was proven for a few classes of trees in \cite{cks-25}, specifically for trees which either contain many leaves or which contain a long pendant path.  In this article, we give a full resolution of \Cref{main conjecture} by proving the following stronger statement, which appeared as \cite[Conjecture~3]{cks-25}. Here and throughout, $\delta(G)$ denotes the minimum degree of the graph $G$.

\begin{thm}\label{main theorem}
Let $G$ be a properly edge-colored subgraph of a hypercube $Q_n$. Then, for every tree $T$ with at most $\delta(G)$ edges, there exists a rainbow copy of $T$ in $G$.
\end{thm}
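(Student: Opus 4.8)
The plan is to build the rainbow copy of $T$ greedily. Fix a \emph{build sequence} $v_1,\dots,v_{t+1}$ of $T$ (where $t=|E(T)|\le\delta(G)$) so that each $v_i$ with $i\ge 2$ has exactly one neighbor $v_{p(i)}$ among $v_1,\dots,v_{i-1}$; thus at the $i$th step we attach a leaf to the partial copy built so far. Place $v_1$ at an arbitrary vertex $x_1$, and having embedded $v_1,\dots,v_{i-1}$ at distinct vertices $x_1,\dots,x_{i-1}$, choose a $G$-neighbor $y$ of $x_{p(i)}$ to play the role of $x_i$. A candidate $y$ is spoiled only if (i) the color $c(x_{p(i)}y)$ already appears on the partial copy, or (ii) $y\in\{x_1,\dots,x_{i-1}\}$; by properness, obstruction (i) forbids at most $i-2$ edges at $x_{p(i)}$.

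To control (ii) I would maintain the invariant that the partial embedding is \emph{isometric}: identifying vertices of $Q_n$ with subsets of $[n]$, an isometric partial embedding assigns to each embedded tree-edge a distinct ``direction'' in $[n]$, forming a set $C$ with $|C|=i-2$, and any two embedded vertices differ only within $C$. Hence if we extend from $x_{p(i)}$ along a direction $c\notin C$, the new vertex lies outside the subcube spanned by the embedded ones, so it is automatically fresh and the embedding stays isometric --- obstruction (ii) simply cannot occur. The number of such fresh-direction edges at $x_{p(i)}$ is at least $\deg_G(x_{p(i)})-|C|\ge t-(i-2)$, and deleting the $\le i-2$ of them spoiled by (i) leaves at least $t-2(i-2)$ legal choices. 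This is positive once $i-2<t/2$, so the greedy isometric construction proceeds without obstruction through roughly the first half of $T$.

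The main obstacle is the \emph{endgame}, the second half of the build sequence, where the number of used colors nears $\delta(G)$ and the degree bound leaves essentially no slack: when attaching the final vertex only $\deg_G(x_{p(t+1)})-(t-1)\ge 1$ edges are guaranteed to carry an unused color, and an adversarial coloring can force all of them back into the already-used subcube. To push through I would (a) reserve, from the start, part of the degree budget at the vertices destined to be used late --- equivalently, commit a set of directions and postpone certain tree-edges --- so those vertices still see enough fresh-direction, fresh-color edges; and/or (b) install a rerouting step: when the greedy move at $x_{p(i)}$ is blocked, every unused-color edge at $x_{p(i)}$ must reach some embedded vertex $x_m$, and re-attaching the fragment of the current copy below $x_m$ across that edge yields a different rainbow partial copy with a new frontier vertex, after which one retries; a potential-function / expansion argument then forces eventual success, exploiting that $Q_n$ has exponentially many vertices so that the only real bottleneck is color-theoretic and local. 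Reconciling such rotations with the branching of a general tree (where Pósa-type rotation does not apply verbatim as it does for a path) is the technical heart of the matter; a possible alternative is an induction on $n$ through the splitting $Q_n=Q_{n-1}\,\square\,K_2$, placing two fragments of $T$ in the two copies of $Q_{n-1}$ and joining them across the matching, but this requires a strengthened statement that carries along a set of forbidden colors, and the bookkeeping there is delicate precisely because $\delta(G)\ge|E(T)|$ is tight.
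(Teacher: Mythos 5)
You have correctly identified the easy part and the hard part, but you have not actually proven the hard part. Your first phase (greedy, ``isometric'' / distinctly-directed embedding of roughly half the edges, using the count $t-2(i-2)>0$) is essentially the paper's starting point (cf.\ \cref{remark:dd-embedding} and the proof of \cref{main theorem}). For the endgame, however, you only list candidate strategies --- reserving degree budget, P\'osa-type rotations with a potential function, or induction on $n$ via $Q_n=Q_{n-1}\,\square\,K_2$ --- and you yourself flag that reconciling rotations with branching, or the bookkeeping in the induction on $n$, is the unresolved ``technical heart of the matter.'' None of these routes is carried out, and none of them is what makes the problem go through; as stated, the proposal is an incomplete proof with the genuinely difficult half missing. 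In particular, rotations change colors and coordinates already committed elsewhere in a branching tree, and there is no argument that a potential function terminates; and the product-induction would need a strengthened hypothesis that you do not formulate.

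The idea you are missing is to abandon the requirement that the second half of the embedding use fresh directions at all, and instead to use the geometry of the cube quantitatively: a walk in $Q_\infty$ whose edges use strictly more than half as many distinct coordinates as its length cannot be closed (\cref{fact:distinct}). This is why the paper pre-embeds not an arbitrary first half of a build sequence but specifically $\floor{T/2}$ --- the first half of \emph{every} root-to-leaf path, which by \cref{char tree attempt 2} has at most $e(T)/2$ edges --- with all coordinates and colors distinct. Then each remaining edge only needs to avoid the colors used so far plus a small, carefully chosen set of coordinates (never more than the degree budget allows, via \cref{fact:extendr,fact:extend1}), so that every root-to-leaf path, and every path joining two different branches through $v_0$, still carries more than half distinct coordinates; injectivity then follows from the walk argument (\cref{maya-easy-injectivity}) rather than from freshness of vertices. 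Organizing this requires the technical strengthening \cref{patch-main-thm} (extension avoiding a prescribed bad vertex), an induction on the height of $T$, a decomposition of $T$ at the root into leaves, even spiders, and remaining subtrees ordered by the excess $e(T_i)-2e(\floor{T_i/2})$, and a separate treatment of spiders (\cref{maya-spiders}), precisely because the tight case $\delta(G)=e(T)$ leaves no slack for the cruder bookkeeping your sketch relies on.
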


Our proof of \cref{main theorem} proceeds by a delicate induction argument, resulting in a more general (albeit more technical) statement \cref{patch-main-thm}. Although the precise statement of \cref{patch-main-thm} is too involved to state here, it informally says that any sufficiently ``well-dispersed'' embedding of the first half of $E(T)$ can be extended to a rainbow embedding of all of $T$. 
Such an embedding of the first half of $E(T)$ can be constructed relatively easily, as discussed in \cref{remark:dd-embedding}.

\subsection{Related Results and Historical Context}

A growing interest within extremal combinatorics is to study sufficient conditions for combinatorial objects to contain ``rainbow'' substructures.  Here we highlight a few problems around the theme of finding rainbow subgraphs in properly edge-colored graphs, and we refer the interested reader to the surveys \cite{NP-22,s-24} for a more complete overview of this topic.

Perhaps the most famous problem of this form is the Ryser--Brualdi--Stein conjecture.  The usual phrasing of this conjecture states that every Latin square of order $n$ contains a transversal of size $n-1$, with the existence of a transversal of size $n$ being guaranteed whenever $n$ is odd.  Equivalently, this conjecture says that any proper edge-coloring of $K_{n,n}$  using $n$ colors contains a rainbow matching of size $n-1$, with a rainbow matching of size $n$ existing whenever $n$ is odd.  A large number of partial results have been obtained for this problem, such as work of Shor \cite{s-82} in 1982 showing the existence of transversals of size $n-O(\log^2 n)$, though this work contained an error which was later corrected by Hatami and Shor in 2008 \cite{hs-08}. This bound stood for many years until recent work of Keevash, Pokrovskiy, Sudakov, and Yepremyan  \cite{kpsy-20} who in 2020 proved the existence of transversals of size $n-O(\log n/\log\log n)$.  Very recently, breakthrough work of Montgomery \cite{m-23} verified the Ryser--Brualdi--Stein conjecture for all large even $n$. We refer readers to \cite{m-23} for the most up-to-date information on the Ryser--Brualdi--Stein conjecture.

Another problem in a similar spirit is Graham's rearrangement conjecture from 1971, which says that for any prime $p$ and $a_1,\ldots,a_d$ non-zero distinct elements of $\Z/p\Z$, there exists a rearrangement of the elements $a_{i_1},\ldots,a_{i_d}$ such that the partial sums $\sum_{j=1}^t a_{i_j}$ are all distinct.  By translating this problem appropriately, one can show that this conjecture is equivalent to showing that a certain class of edge-colored directed graphs, or \emph{digraphs}, contain long rainbow directed paths. In particular, Graham's rearrangement conjecture, as well as various strengthenings thereof, would be implied by the following question asked very recently in 2025 by Buci\'c, Frederickson, M\"uyesser, Pokrovskiy, and Yepremyan \cite{bfmpy-25}; here a proper edge-coloring of a digraph is one in which no two directed edges sharing either a common source or a common sink are given the same color.
\begin{quest}[{\cite{bfmpy-25}}]\label{directed Schrijver}
If $D$ is a properly edge-colored digraph such that every vertex has both in-degree and out-degree equal to $d$, does $D$ contain a rainbow directed path of length $d-1$? 
\end{quest}
This question in turn is a significant strengthening of the following conjecture of Schrijver~\cite{s-18} which itself is a strengthening of a conjecture of Andersen \cite{a-89} who made the same statement only for the case $G=K_n$.
\begin{conj}[Schrijver~\cite{s-18}]
If $G$ is a properly edge-colored $d$-regular graph, then $G$ contains a rainbow path of length $d-1$. 
\end{conj}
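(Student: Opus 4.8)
This is Schrijver's conjecture (generalizing Andersen's conjecture for $G = K_n$) and is open, so what follows is the natural line of attack rather than a complete proof. The plan is an extremal-path / rotation-extension argument in the spirit of P\'osa's rotation method, adapted to respect the coloring. Fix a proper edge-coloring of a $d$-regular graph $G$, let $P = v_0 v_1 \cdots v_\ell$ be a rainbow path with $\ell$ maximum, and suppose toward a contradiction that $\ell \le d - 2$. A greedy computation pins down $\ell$ from below: among the $d - 1$ edges at $v_\ell$ other than $v_{\ell-1}v_\ell$, at most $\ell - 1$ lead back into $P$ and, by properness, at most $\ell$ carry a color used on $P$, so if $d - 1 > 2\ell - 1$ we could extend $P$; hence $\ell \ge d/2$. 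Thus the gap to close is linear in $d$ but no larger than $d/2$.

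The engine is a color-respecting rotation. By maximality, every edge from $v_\ell$ to a vertex outside $V(P)$ carries a color already used on $P$, so at least $d - \ell - 1$ of the edges at $v_\ell$ are \emph{fresh-colored} chords $v_\ell v_i$ with $v_i \in \{v_1, \dots, v_{\ell - 2}\}$; for each such chord the rotated path $v_0 \cdots v_i v_\ell v_{\ell - 1} \cdots v_{i+1}$ drops $v_i v_{i+1}$, adds $v_i v_\ell$, keeps length $\ell$, and --- precisely because the new edge's color is fresh --- is again rainbow, now ending at $v_{i+1}$. Iterating these rotations, always keeping $v_0$ fixed, yields a set $S$ of vertices, each the free endpoint of a length-$\ell$ rainbow path reachable from $P$. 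One then wants a P\'osa-type lemma bounding $|N_G(S)|$ by roughly $|S|$, which forces $S$ to grow until either some reachable path can be extended, or a reachable configuration closes into a rainbow cycle longer than $P$ --- which, having an outside neighbor joined by a fresh-colored edge, again produces a longer rainbow path; in every case this contradicts maximality. An absorption-flavored variant is equally plausible: first build a short rainbow ``absorber'' that can swallow any small leftover vertex set, then attach it to an almost-spanning rainbow path obtained greedily.

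The step I expect to be the real obstacle --- and the reason this conjecture is still open --- is the expansion-and-bookkeeping core of the rotation argument. Schrijver's hypothesis provides only $d$-regularity, not the vertex expansion that P\'osa's method normally exploits; and the coloring shrinks the supply of rotations, since the endpoint of a near-maximal path can see as many as $\ell$ repeated colors, leaving only about $d - \ell$ fresh chords --- fewest precisely when $\ell$ is largest. Moreover a single rotation is clean, but performing it changes which colors count as used, so the set of fresh chords shifts and the legality conditions compound along any chain of rotations. Getting past this seems to require an extra structural input --- a genuine expansion assumption, or control on the number of color classes, as is available when $G = K_n$ --- or a subtler potential-function or entropy-compression accounting that trades color conflicts against path length, perhaps also exploiting palette-colored chords through surgeries that relocate rather than create a conflict. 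Short of that, the greedy bound together with a bounded number of rotations still yields a rainbow path of length $d - o(d)$, which is roughly the best currently known; closing the remaining $o(d)$ gap is where the difficulty is concentrated.
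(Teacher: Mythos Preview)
You are correct that this statement is Schrijver's conjecture and remains open; the paper does not prove it, nor does it attempt to. It is stated purely as background, with a remark that it has been asymptotically resolved by Buci\'c, Frederickson, M\"uyesser, Pokrovskiy, and Yepremyan. So there is nothing to compare your sketch against in the paper itself.

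Your write-up is an honest and reasonable overview of the rotation-extension heuristic and its limitations. One small correction: your greedy lower bound is slightly miscounted. From the endpoint $v_\ell$, there are $d$ edges, at most $\ell$ of which go back into $V(P)\setminus\{v_\ell\}$ (namely to $v_0,\dots,v_{\ell-1}$), and at most $\ell$ of which carry a used color; but the edge $v_{\ell-1}v_\ell$ is counted in both, so the number of extending edges is at least $d-(2\ell-1)$. This gives $\ell\ge (d+1)/2$, not $\ell\ge d/2$ --- a minor point, but the arithmetic as written does not quite parse. More substantively, the final paragraph's claim that ``the greedy bound together with a bounded number of rotations still yields a rainbow path of length $d-o(d)$'' overstates what rotations alone give; the $d-o(d)$ results in the literature use considerably heavier machinery. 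Otherwise your diagnosis of where the difficulty lies is accurate.
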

In both these problems the quantity $d-1$ is best possible due to a construction of Maamoun and Meyniel \cite{mm-84}, the simplest version of which starts by taking the hypercube $Q_{d-1}$, equipped with the Cayley edge-coloring described above, and then adding all of the diagonals of $Q_{d-1}$ as edges in a $d$th color. Schrijver's conjecture has been asymptotically resolved by Buci\'c, Frederickson, M\"uyesser, Pokrovskiy, and Yepremyan \cite{bfmpy-25}, and we refer the interested reader to their paper for more on Graham's rearrangement conjecture and on problems related to finding rainbow paths in graphs and digraphs.

The last related problem we discuss in full is the rainbow Tur\'an problem, which was originally initiated by Keevash, Mubayi, Sudakov, and Verstra\"ete \cite{kmsv-07} in 2007. Given a family of graphs $\mathcal{F}$, let $\ex^*(n,\mathcal{F})$ denote the maximum number of edges in an $n$-vertex graph $G$ admitting a proper edge-coloring containing no element of $\mathcal F$ as a rainbow subgraph. For example, the hypercube $Q_d$ together with its Cayley edge-coloring contains no rainbow cycle, so taking $d\approx \log n$ implies $\ex^*(n,\mathcal{C})=\Omega(n \log n)$ where $\mathcal{C}$ denotes the set of all cycles.  Determining a corresponding upper bound for $\ex^*(n,\mathcal{C})$ has been a significant area of study and, after a number of preceding works \cite{dls-12, t-24, j-21}, was largely solved in 2025 by Alon, Buci\'c, Sauermann, Zakharov, and Zamir \cite{absz-25} who showed both that $\ex^*(n,\mathcal{C})\le n (\log n )^{1+o(1)}$ as well as some interesting connections between this problem and additive number theory.

Our main problem of interest for this paper, \Cref{main conjecture} arose from trying to find a rainbow analog of the Erd\H{o}s--S\'os Conjecture from extremal graph theory, which states that every tree $T$ on $d$ edges has the same Tur\'an number as the star on $d$ edges.  The most natural analog of this conjecture, namely that $\ex^*(n,T)=\ex^*(n,K_{1,d})$ for every tree $T$ on $d$ edges, is false essentially due to the same construction of Maamoun and Meyniel \cite{mm-84} for Schrijver's conjecture mentioned above.  To circumvent this, the authors in \cite{cks-25} considered a relative rainbow Tur\'an number $\ex^*(G,T)$, which is defined to be the maximum number of edges in a subgraph $G'\sub G$ which can be properly edge-colored to avoid rainbow copies of $T$.  It was then asked in \cite{cks-25} which host graphs $G$ have the property that $\ex^*(G,T)=\ex^*(G,K_{1,d})$ for all $d$-edge trees.  In particular, it was conjectured that the hypercube might have this property, with the motivation for this particular choice of host coming both from the fact that the Maamoun--Meyniel construction shows that graphs slightly denser than the hypercube can fail to have this property, as well as from the canonical coloring of the hypercube showing that $\ex^*(Q_d,F)$ is only interesting to study when $F$ does not contain a cycle.  Our main result \Cref{main theorem} can thus be seen as a minimum degree analog of the conjecture that $\ex^*(Q_n,T)=\ex^*(Q_n,K_{1,d})$ for all $d$-edge trees $T$.

\subsection{Paper Organization}
The rest of our paper is organized as follows.  In \Cref{sec:prelim} we outline some of the main proof ideas and establish several key lemmas and definitions.  The bulk of our proof is then given in \Cref{sec:main} where we prove our main technical result, \Cref{patch-main-thm}.  We close with some open problems and related results in \Cref{sec:con}.

\section{Preliminaries}\label{sec:prelim}
In this section we introduce some of the necessary definitions and basic lemmas for our main arguments.  For convenience, we split our discussion into three parts: one focused on the high-level proof ideas and basic observations that we use, one on properties of trees, and one on embeddings into the hypercube.

\subsection{Proof Ideas}
In this subsection we discuss some of the high-level proof ideas of our argument.  For this, the following basic definitions will be useful.

\begin{defn}[Infinite hypercube]\label{def:hypercube} The (infinite) \emph{hypercube} $Q_\infty$ is the graph whose vertex set is $\{0,1\}^{\mathbb N}$ and whose edge set is given by connecting two vertices $x,y$ if they differ in exactly one coordinate. Given an edge $e:=xy\in E(Q_\infty)$, the \emph{coordinate} of $e$ is the unique $i\in\mathbb N$ at which $x$ and $y$ differ.
\end{defn}

Throughout this paper we are solely concerned with finite subgraphs of $Q_\infty$, and as such we could alternatively consider each of our graphs as subgraphs of some finite hypercube $Q_N$ (with vertex set $\mathbb{F}_2^N$) rather than $Q_\infty$. We use $Q_\infty$ throughout to emphasize that the dimensionality of the ambient hypercube in which we are working has no importance.

\begin{defn}[Homomorphisms and embeddings]
Given two graphs $F,G$, a map $\phi\colon V(F)\to V(G)$ is a \textit{homomorphism} if $\phi$ maps edges of $F$ to edges of $G$; that is, if for every edge $xy\in E(F)$ we have $\phi(x)\phi(y)\in E(G)$.  A homomorphism $\phi$ is called an \textit{embedding} if it is injective.

We frequently abuse notation by writing $\phi(xy)$ for an edge $xy\in E(F)$ to denote the edge $\phi(x)\phi(y)$ of $G$. Similarly, given a set of edges $E\sub E(F)$ we write $\phi(E):=\{\phi(e):e\in E\}$.  We often write $\phi\colon F\to G$ rather than the more precise $\phi\colon V(F)\to V(G)$ for ease of notation.  
\end{defn}

Ultimately we aim to show that if $G\sub Q_\infty$ is a properly edge-colored graph and $T$ is a tree with $e(T)\ge \delta(G)$, then there exists a homomorphism $\phi\colon T\to G$ which is both an embedding and has the property that distinct edges of $T$ map to edges of $G$ with distinct colors.  The distinct colors condition alone is easy to ensure by iteratively constructing $\phi$ to avoid previously used colors, so in some sense our main challenge will be defining $\phi$ so that distinct vertices of $T$ map to distinct vertices of $G$.  We will do this via the following key lemma regarding the geometry of the hypercube.

\begin{obs}[Non-closedness of walks in $Q_\infty$]\label{fact:distinct} 
Let $x_0x_1\cdots x_m$ be a walk in $Q_\infty$. If the number of distinct coordinates used by the $m$ edges $x_0x_1,\ldots,x_{m-1}x_m$ is strictly more than $m/2$, then $x_0\neq x_m$.
\end{obs}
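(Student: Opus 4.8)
The plan is to translate the statement into linear algebra over $\mathbb F_2$. Regard $V(Q_\infty)=\{0,1\}^{\mathbb N}$ as the $\mathbb F_2$-vector space of $0/1$ sequences with coordinatewise addition mod $2$, and for $j\in\mathbb N$ let $e_j$ be the sequence having a $1$ in coordinate $j$ and $0$ elsewhere. The key elementary fact is that crossing an edge of coordinate $j$ adds exactly $e_j$ to the current vertex. Thus, writing $c_i\in\mathbb N$ for the coordinate of the edge $x_{i-1}x_i$, we obtain
\[
x_m \;=\; x_0 \;+\; \sum_{i=1}^{m} e_{c_i},
\]
with all arithmetic taking place in $\mathbb F_2^{\mathbb N}$ (the sum is finite since the walk has only $m$ edges).

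From here I would argue by contraposition: assuming $x_0=x_m$, I will show the walk uses at most $m/2$ distinct coordinates. The displayed identity gives $\sum_{i=1}^{m} e_{c_i}=0$. Since the $e_j$ have pairwise disjoint supports, reading this equation off coordinate by coordinate shows that for every $j\in\mathbb N$ the number of indices $i\in\{1,\dots,m\}$ with $c_i=j$ is even. Let $k$ denote the number of distinct values among $c_1,\dots,c_m$, i.e., the number of distinct coordinates used by the walk. Each such value occurs a positive, hence $\ge 2$, number of times, so $m=\sum_{j}\#\{i:c_i=j\}\ge 2k$, giving $k\le m/2$. Taking the contrapositive, if the walk uses strictly more than $m/2$ distinct coordinates then $x_0\ne x_m$, which is exactly the claim.

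As for the main obstacle: there really isn't a substantial one — this is a short parity count once the reformulation $x_m=x_0+\sum e_{c_i}$ is in place. The only point deserving a word of care is the inference from ``the $\mathbb F_2$-sum of the vectors $e_{c_i}$ vanishes'' to ``every coordinate is used an even number of times'', which follows immediately from the disjointness of the supports of the $e_j$ and the definition of addition mod $2$.
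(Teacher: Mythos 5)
Your proof is correct, and it is essentially the paper's argument in contrapositive form: the paper uses pigeonhole to find a coordinate crossed by exactly one edge and observes that $x_0$ and $x_m$ must differ there, while you phrase the same parity fact as ``if $x_0=x_m$ then every coordinate is crossed an even number of times, hence at most $m/2$ distinct coordinates occur'' via the $\mathbb F_2$ identity $x_m=x_0+\sum_i e_{c_i}$. Both rest on the identical observation about parity of crossings per coordinate, so there is nothing to flag.
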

\begin{proof} By the pigeonhole principle, there must be some coordinate $j$ which is the coordinate of exactly one edge $x_{i-1}x_i$ with $1\leq i\leq m$. This implies that
\[(x_0)_j=\cdots=(x_{i-1})_j\neq (x_i)_j=\cdots=(x_m)_j,\]
proving that $x_0$ and $x_m$ are distinct vertices.
\end{proof}

Hence, to ensure that two vertices $u,v$ of $T$ map to distinct vertices of $Q_\infty$ under a given homomorphism $\phi\colon T\to Q_\infty$, it suffices to show that the edge set of the unique path from $u$ to $v$ in $T$ is mapped to a set of edges of $Q_{\infty}$ encompassing strictly more than $\dist_T(u,v)/2$ distinct coordinates.

Motivated by the above, we aim to iteratively construct the homomorphism $\phi\colon T\to G$ in a manner that never repeats colors (which is necessary for the image of $\phi$ to be rainbow) and in a manner that avoids repeating coordinates as much as possible (with the intent to apply \cref{fact:distinct} to prove that $\phi$ is an embedding). The following observation quantifies how much we can repeat avoiding coordinates. 

\newcommand{\xcol}{X_{\mathrm{col}}}
\newcommand{\xcor}{X_{\mathrm{coor}}}

\begin{obs}[Extending a rainbow homomorphism]\label{fact:extendr}
Let $G$ be a properly edge-colored subgraph of $Q_\infty$ and fix a vertex $x\in V(G)$. Let $\xcol$ be a finite subset of the colors used by edges of $G$ and $\xcor\subseteq\mathbb N$ a finite set of coordinates. Suppose that there are at least $r$ neighbors $x'$ of $x$ in $G$ such that the edge $xx'$ has color in $\xcol$ and coordinate in $\xcor$. If $\abs{\xcor}+\abs{\xcol}-r<\delta(G)$ then there is an edge $xy\in E(G)$ with color not in $\xcol$ and coordinate not in $\xcor$.
\end{obs}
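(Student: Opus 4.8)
The plan is to count the edges incident to $x$ and show that the ``bad'' ones --- those with color in $\xcol$ or coordinate in $\xcor$ --- cannot cover all of them. First I would let $A$ denote the set of edges $xx'\in E(G)$ whose color lies in $\xcol$, and $B$ the set of edges $xx'\in E(G)$ whose coordinate lies in $\xcor$. The goal is to produce an edge at $x$ lying in neither $A$ nor $B$, so it suffices to bound $\abs{A\cup B}$ strictly below $\deg_G(x)$, which is at least $\delta(G)$.

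The two individual bounds come from the two structural features in play. Since the edge-coloring of $G$ is proper, the edges at $x$ receive pairwise distinct colors, so each color in $\xcol$ is realized by at most one edge at $x$; hence $\abs{A}\le\abs{\xcol}$. Since $G$ is a subgraph of $Q_\infty$, for each coordinate $i$ there is at most one edge at $x$ with coordinate $i$ (namely the one leading to the neighbor of $x$ differing in coordinate $i$); hence $\abs{B}\le\abs{\xcor}$. The key point is then that the hypothesized $r$ neighbors give $r$ edges at $x$ that have color in $\xcol$ \emph{and} coordinate in $\xcor$, i.e.\ that lie in $A\cap B$, so $\abs{A\cap B}\ge r$.

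Combining these via inclusion--exclusion,
\[
\abs{A\cup B}=\abs{A}+\abs{B}-\abs{A\cap B}\le \abs{\xcor}+\abs{\xcol}-r<\delta(G)\le \deg_G(x).
\]
Thus some edge $xy\in E(G)$ lies outside $A\cup B$; by definition of $A$ and $B$, its color is not in $\xcol$ and its coordinate is not in $\xcor$, as desired. There is no real obstacle here: the only things to be careful about are that the $r$ edges must be counted inside the intersection $A\cap B$ (which is exactly why the hypothesis insists the neighbors have color in $\xcol$ \emph{and} coordinate in $\xcor$), and that the two single-set bounds genuinely use properness and the hypercube structure respectively.
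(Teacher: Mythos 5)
Your proof is correct and follows essentially the same argument as the paper: both bound the number of edges at $x$ with color in $\xcol$ or coordinate in $\xcor$ via inclusion--exclusion, using properness and the hypercube structure for the single-set bounds and the $r$ hypothesized edges for the intersection. No issues.
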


We  always apply \cref{fact:extendr} in the context of extending a homomorphism $\phi'\colon (T-w)\to G$ to a homomorphism $\phi\colon T\to G$, where $w$ is a leaf of a tree $T$. Writing $v\in V(T)$ for the sole neighbor of $w$, one should interpret the vertex $x$ in \cref{fact:extendr} as the image $\phi'(v)$ and the vertex $y$ as a potential choice for $\phi(w)$ so that the edge $\phi(vw)$ avoids a set of forbidden colors $\xcol$, representing the colors of edges used by $\phi(T-w)$, together with a set of forbidden coordinates $\xcor$.

\begin{proof}
    Let $N(x)$ be the set of neighbors of $x$ in $G$.  Further, let $N_{\mathrm{col}}(x)$ denote the set of neighbors $x'$ with $xx'$ using a color in $\xcol$, and define  $N_{\mathrm{coor}}(x)$ similarly. We observe that
    \begin{align*}\abs{N(x)\setminus (N_{\mathrm{col}}(x)\cup N_{\mathrm{coor}}(x))}&=\abs{N(x)}-\abs{N_{\mathrm{col}}(x)}-\abs{N_{\mathrm{coor}}(x)}+\abs{N_{\mathrm{col}}(x)\cap N_{\mathrm{coor}}(x)}\\ &\ge \delta(G)-\abs{\xcor}-\abs{\xcol}+r\ge 1,\end{align*}
    where the first inequality used both that $G$ has a proper edge coloring to imply that at most one neighbor of $x$ uses any given color from $\xcol$, and also that $G\subseteq Q_\infty$ implies that at most one neighbor of $x$ uses any given coordinate from $\xcor$. The inequality above implies that there exists a neighbor $y$ of $x$ satisfying the desired properties.
\end{proof} 

\begin{rem}\label{remark:dd-embedding}
For trees $T$ and graphs $G$ with $e(T)=\delta(G)$, iterated applications of \cref{fact:extendr} with $r=0$ allows us to build a homomorphism which maps approximately half the edges of $T$ into $G$ while avoiding any repetition of colors or coordinates. 
\end{rem}

\cref{remark:dd-embedding} motivates the primary strategy for our proof: given a tree $T$ and an edge-colored graph $G\sub Q_{\infty}$ into which we aim to embed $T$, we first strategically pick some subtree $T'\sub T$ with about half as many edges as $T$ and then embed $T'$ into $G$ without any repetitions in coordinates or colors. From here, we order the remaining vertices of $T$ in some clever way and iteratively embed each vertex while avoiding all previously used colors as well as a carefully chosen set of previously used coordinates (which notably can be no larger than the set guaranteed in \Cref{fact:extendr}) at each step.  While this proof ends up being entirely elementary, we emphasize that the exact choices of the subtree $T'$, the ordering of the remaining vertices, and the set of forbidden coordinates to choose at each step are extremely delicate.

Before closing this subsection, we record the $r=1$ case of \Cref{fact:extendr} separately, as we make repeated use of this special case.

\begin{obs}[Extending a rainbow homomorphism, special case]\label{fact:extend1}
Let $G$ be a properly edge-colored subgraph of $Q_\infty$ and fix a vertex $x\in V(G)$. Let $\xcol$ be a finite subset of the colors used by edges of $G$ and $\xcor\subseteq\mathbb N$ a finite set of coordinates, and suppose there is an edge $xx'\in E(G)$ incident to $x$ whose color is in $\xcol$ and whose coordinate is in $\xcor$. If $\abs{\xcol}+\abs{\xcor}\leq\delta(G)$ then there is an edge $xy\in E(G)$ incident to $x$ with color not in $\xcol$ and coordinate not in $\xcor$.
\end{obs}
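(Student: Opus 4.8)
The plan is to derive this immediately from \cref{fact:extendr} by taking $r=1$. The hypothesis supplies an edge $xx'\in E(G)$ with color in $\xcol$ and coordinate in $\xcor$, so $x$ has at least $r=1$ neighbor of the type required in \cref{fact:extendr}. It then remains only to verify that the numerical condition $\abs{\xcor}+\abs{\xcol}-r<\delta(G)$ of \cref{fact:extendr} is met. Since $r=1$ and all of $\abs{\xcor}$, $\abs{\xcol}$, $\delta(G)$ are nonnegative integers, the inequality $\abs{\xcor}+\abs{\xcol}-1<\delta(G)$ is equivalent to $\abs{\xcol}+\abs{\xcor}\le\delta(G)$, which is exactly the assumption of the present observation. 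Applying \cref{fact:extendr} then produces an edge $xy\in E(G)$ with color not in $\xcol$ and coordinate not in $\xcor$, as desired.

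Alternatively, one could reprove the statement from scratch by repeating the inclusion--exclusion argument used for \cref{fact:extendr}: with $N(x)$, $N_{\mathrm{col}}(x)$, and $N_{\mathrm{coor}}(x)$ defined as in that proof, the edge $xx'$ witnesses that $x'\in N_{\mathrm{col}}(x)\cap N_{\mathrm{coor}}(x)$, so this intersection has size at least $1$, and hence $\abs{N(x)\sm(N_{\mathrm{col}}(x)\cup N_{\mathrm{coor}}(x))}\ge\delta(G)-\abs{\xcol}-\abs{\xcor}+1\ge 1$, giving the desired neighbor $y$.

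There is essentially no obstacle to overcome here; the only point worth isolating is that subtracting the term $r=1$ converts the strict inequality appearing in \cref{fact:extendr} into the non-strict hypothesis $\abs{\xcol}+\abs{\xcor}\le\delta(G)$ stated here, which is the slightly cleaner form we will invoke repeatedly.
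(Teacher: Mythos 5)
Your proposal is correct and matches the paper's treatment: the paper records this observation precisely as the $r=1$ case of \cref{fact:extendr}, with the integrality of $\abs{\xcol}+\abs{\xcor}$ and $\delta(G)$ converting the strict inequality into the stated non-strict one, exactly as you argue.
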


\subsection{Trees}

For the remainder of the paper we will consider all trees $T$ to be rooted, with the root vertex typically denoted as $v_0$ unless stated otherwise. We now establish some structural definitions of trees. 

\begin{defn}[Properties of trees]\label{def:tree} Given a tree $T$ with root $v_0$, we define the following notions.

\begin{defitemize}
    \item For a vertex $v\in V(T)$, the \emph{descendants} of $v$ are the vertices $w\in V(T)$ for which the unique path between $v_0$ and $w$ in $T$ contains $v$. 

    \item A \emph{leaf} is a vertex of $T$ with no descendants other than itself. Notably, $v_0$ is not considered to be a leaf unless $T$ consists of a single vertex.
    
    \item For a vertex $v\in V(T)$, we write $T(v)$ for the subtree of $T$ rooted at $v$ induced by its descendants (including $v$). We write $T-v$ for the subtree of $T$ rooted at $v_0$ induced by the vertex set $V(T) \setminus V(T(v))$.
    
    \item For a vertex $v\in V(T)$, the \emph{level} $\lambda(v)$ of $v$ is defined to be the distance between $v$ and $v_0$, i.e.\ $\lambda(v):=\dist_T(v,v_0)$. We define
    \[\lambda_{\max}(v):=\max_{\substack{\text{descendants}\\w\text{ of }v\text{ in }T}}\lambda(w)\]
    to be the greatest level among all of the descendants of $v$.
    
    \item Define $\floor{T/2}$ and $\ceil{T/2}$ to be the subtrees of $T$ rooted at $v_0$ induced by the vertex sets
    \begin{align*}
        V(\floor{T/2})&=\{v\in V(T):\lambda(v)\leq\lfl \lambda_{\max}(v)/2\rfl\} \text{ and}\\
        V(\ceil{T/2})&=\{v\in V(T):\lambda(v)\leq\lcl \lambda_{\max}(v)/2\rcl\}.
    \end{align*}
    In other words, $\floor{T/2}$ is the union of the first halves of all paths from $v_0$ to a leaf of $T$, excluding the middle edges of odd-length paths, and $\ceil{T/2}$ is the same with middle edges of odd-length paths included. We include an example of this in \Cref{fig:enter-label}. 
\end{defitemize}
\end{defn}

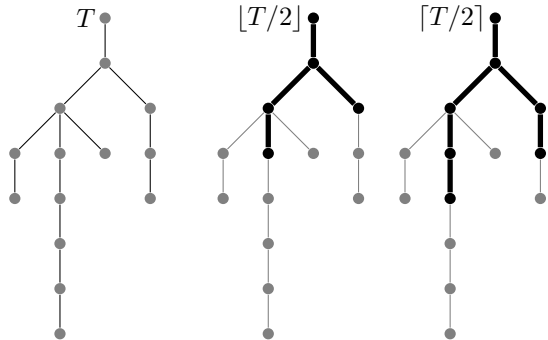
\begin{figure}[h]
\centering
\quad
\tikzset{exstyle1/.style={scale=0.6,baseline=-2cm,gvtx/.style={vtx,gray}}}
\begin{tikzpicture}[exstyle1,vtx/.append style={gray}]
    \path (0,0) coordinate[vtx] (x0) node[left] {$T$}
    --++ (0,-1) coordinate[vtx] (x1)
    --++ (-1,-1) coordinate[vtx] (x2)
    --++ (-1,-1) coordinate[vtx] (x3)
    --++ (0,-1) coordinate[vtx] (x4)
    (x2) --++ (0,-1) coordinate[vtx] (x5)
    --++ (0,-1) coordinate[vtx] (x6)
    --++ (0,-1) coordinate[vtx] (x7)
    --++ (0,-1) coordinate[vtx] (x12)
    --++ (0,-1) coordinate[vtx] (x13)
    (x2) --++ (1,-1) coordinate[vtx] (x8)
    (x1) --++ (1,-1) coordinate[vtx] (x9)
    --++ (0,-1) coordinate[vtx] (x10)
    --++ (0,-1) coordinate[vtx] (x11)
    ;
    \draw (x0) -- (x1) -- (x2) -- (x3) -- (x4)
    (x2) -- (x5) -- (x6) -- (x7) -- (x12) -- (x13)
    (x2) -- (x8)
    (x1) -- (x9) -- (x10) -- (x11)
    ;
\end{tikzpicture}
\qquad
\begin{tikzpicture}[exstyle1]
    \path (0,0) coordinate[vtx] (x0) node[left] {$\floor{T/2}$}
    --++ (0,-1) coordinate[vtx] (x1)
    --++ (-1,-1) coordinate[vtx] (x2)
    --++ (-1,-1) coordinate[gvtx] (x3)
    --++ (0,-1) coordinate[gvtx] (x4)
    (x2) --++ (0,-1) coordinate[vtx] (x5)
    --++ (0,-1) coordinate[gvtx] (x6)
    --++ (0,-1) coordinate[gvtx] (x7)
    --++ (0,-1) coordinate[gvtx] (x12)
    --++ (0,-1) coordinate[gvtx] (x13)
    (x2) --++ (1,-1) coordinate[gvtx] (x8)
    (x1) --++ (1,-1) coordinate[vtx] (x9)
    --++ (0,-1) coordinate[gvtx] (x10)
    --++ (0,-1) coordinate[gvtx] (x11)
    ;
    \draw[line width = 2pt] (x0) -- (x1) -- (x2)
    (x2) -- (x5)
    (x1) -- (x9)
    ;
    \draw[gray] (x2) -- (x3) -- (x4) 
    (x6) -- (x7) -- (x12) -- (x13)
    (x5) -- (x6)
    (x2) -- (x8) 
    (x9) -- (x10) -- (x11);
\end{tikzpicture}
\quad
\begin{tikzpicture}[exstyle1]
    \path (0,0) coordinate[vtx] (x0) node[left] {$\ceil{T/2}$}
    --++ (0,-1) coordinate[vtx] (x1)
    --++ (-1,-1) coordinate[vtx] (x2)
    --++ (-1,-1) coordinate[gvtx] (x3)
    --++ (0,-1) coordinate[gvtx] (x4)
    (x2) --++ (0,-1) coordinate[vtx] (x5)
    --++ (0,-1) coordinate[vtx] (x6)
    --++ (0,-1) coordinate[gvtx] (x7)
    --++ (0,-1) coordinate[gvtx] (x12)
    --++ (0,-1) coordinate[gvtx] (x13)
    (x2) --++ (1,-1) coordinate[gvtx] (x8)
    (x1) --++ (1,-1) coordinate[vtx] (x9)
    --++ (0,-1) coordinate[vtx] (x10)
    --++ (0,-1) coordinate[gvtx] (x11)
    ;
    \draw[line width = 2pt] (x0) -- (x1) -- (x2)
    (x2) -- (x5) -- (x6)
    (x1) -- (x9) -- (x10)
    ;
    \draw[gray] (x2) -- (x3) -- (x4) 
    (x6) -- (x7) -- (x12) -- (x13) 
    (x2) -- (x8) 
    (x10) -- (x11);
\end{tikzpicture}

\caption{An example of a tree $T$ and the corresponding trees $\floor{T/2}$ and $\ceil{T/2}$.}
\label{fig:enter-label}
\end{figure}

Ultimately, our embedding of $T$ will be constructed by starting with an embedding of $\floor{T/2}$.  Because of this, our embedding will be hardest to complete whenever $e(\floor{T/2})$ is large, as this means there will be more restrictions in the colors we need to avoid after embedding the edges in $\floor{T/2}$.  As such, it is useful to characterize which trees have $e(\floor{T/2})$ as large as possible.  Our characterization relies on spiders, which informally are trees formed by subdividing the edges of a star $K_{1,m}$.  More precisely we work with the following.

\begin{defn}[Spiders]\label{def:spider} A \emph{spider} is a rooted tree $T$ with root $v_0$ for which $\deg(v)\leq 2$ for every $v\in V(T)\setminus \{v_0\}$. For each leaf $w$ of $T$, the unique path from $v_0$ to $w$ is called a \emph{leg} of $T$; the legs partition the edge set of $T$. We say a spider is \emph{even} if all of its legs have even length. 
\end{defn}

\begin{figure}[h]
\centering
\begin{tikzpicture}[scale=1]
    
    \draw (0,0) coordinate[vtx] (s1) node[left] {$v_0$};
    \coordinate (child) at (0,-1);
    \begin{scope}[yscale=0.7]
    \draw (s1) -- (-1,0|-child) node[vtx]{}
        \foreach \i in {0,1,2,3} {--++ (0,-1) node[vtx] {}};
    \draw (s1) -- (-0.5,0|-child) node[vtx]{}
        \foreach \i in {0,1,2} {--++ (0,-1) node[vtx] {}};
    \draw (s1) -- (0,0|-child) node[vtx]{}
        \foreach \i in {0,1,2} {--++ (0,-1) node[vtx] {}};
    \draw (s1) -- (0.5,0|-child) node[vtx]{}
        \foreach \i in {0} {--++ (0,-1) node[vtx] {}};
    \draw (s1) -- (1,0|-child) node[vtx]{}
        \foreach \i in {0} {--++ (0,-1) node[vtx] {}};
    \end{scope}
\end{tikzpicture}
\caption{A spider rooted at $v_0$ with five legs: one of length 5, two of length 4, and two of length 2.}
\label{fig:enter-label}
\end{figure}
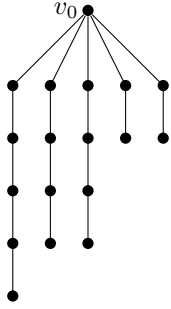

\begin{prop}\label{char tree attempt 2} Let $T$ be a rooted tree.
\begin{enumerate}[label=(\arabic*)]
    \item It holds that $e(T)\geq 2e(\floor{T/2})$.
    
    \item If $e(T)=2e(\floor{T/2})$, then $T$ is an even spider.
    
    \item If $e(T)=2e(\floor{T/2})+1$, then either $\floor{T/2}=\ceil{T/2}$ or $T$ is a spider with exactly one odd-length leg.
\end{enumerate}
\end{prop}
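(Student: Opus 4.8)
The natural approach is to analyze, for each vertex $v$, the ``halving point'' of the legs of the spider-like structure hanging below $v$, and to track the discrepancy $e(T) - 2e(\floor{T/2})$ edge by edge in a bottom-up fashion. Concretely, for a vertex $v$ let $d(v) := \lambda_{\max}(v) - \lambda(v)$ denote the ``depth below $v$.'' The key bookkeeping identity I would establish is that
\[
e(T) - 2e(\floor{T/2}) = \sum_{v\in V(T)} \bigl(\text{number of children of $v$ in $T$ that lie outside $\floor{T/2}$}\bigr) \cdot 1 \;-\; (\text{correction terms}),
\]
but it is cleaner to argue directly by induction on $e(T)$. First I would handle the base cases (paths from $v_0$, i.e.\ spiders with one leg) by hand: a path of length $m$ has $\floor{T/2}$ a path of length $\lfl m/2\rfl$, giving $e(T) - 2e(\floor{T/2}) = m - 2\lfl m/2\rfl \in\{0,1\}$, with the value $1$ occurring exactly when $m$ is odd, and in that case $\floor{T/2}\neq\ceil{T/2}$ — consistent with all three claims.

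For the inductive step, let $v_0$ be the root and consider its children $u_1,\ldots,u_k$, with $T_i := T(u_i)$ rooted at $u_i$, which I re-root appropriately; I'd relate $\floor{T/2}$ to the $\floor{T_i/2}$. The subtlety is that the level of a vertex $w$ inside $T_i$ as measured from $v_0$ is one more than its level measured from $u_i$, so the threshold $\lfl \lambda_{\max}(w)/2\rfl$ shifts. I would set up the recursion by splitting on whether $\lambda_{\max}(u_i)$ (measured from $v_0$) is even or odd — equivalently whether the longest leg through $u_i$ has even or odd length — and carefully compute $e(\floor{T/2})$ restricted to $T_i\cup\{v_0u_i\}$ in terms of $e(\floor{T_i/2})$ and of which vertices get pushed across the halving threshold by the shift. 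The upshot I expect: each child contributes either exactly $2e(\floor{T_i/2})+1$ or exactly $2e(\floor{T_i/2})+2$ to the count $e(T)$, relative to its contribution $2e(\floor{T_i/2})$-ish to $2e(\floor{T/2})$; the ``$+1$ versus $+2$'' depends on parities, and a child with $\geq 2$ subchildren (i.e. $T_i$ not a path) forces enough slack that the total discrepancy jumps. Summing over $i$: the discrepancy $e(T) - 2e(\floor{T/2})$ is at least $k$ minus a bounded correction, which already gives part (1) (it's $\geq 0$) once one checks the correction is never too large, and pins down equality.

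For parts (2) and (3) I would extract from the equality analysis exactly which configurations are tight. Part (2): $e(T) = 2e(\floor{T/2})$ forces every child subtree $T_i$ to be a path (else the discrepancy is $\geq 2$) and forces each such path-leg to have even length (an odd leg contributes the extra $+1$, as in the base case), and moreover forces $k$ arbitrary — so $T$ is an even spider. Part (3): $e(T) = 2e(\floor{T/2})+1$ is tight in exactly two ways — either exactly one leg is an odd-length path and all others are even-length paths, making $T$ a spider with one odd leg; or the extra $+1$ comes not from leg parity but from the ``push across the threshold'' term, which (tracing the case analysis) happens precisely when $\floor{T/2} = \ceil{T/2}$, i.e. no root-to-leaf path has odd length so no middle edges are ever excluded. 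The main obstacle I anticipate is bookkeeping the threshold-shift when passing from $u_i$-rooted levels to $v_0$-rooted levels: a vertex $w$ with $\lambda_{T_i}(w) = \lfl\lambda_{\max,T_i}(w)/2\rfl$ (in $\floor{T_i/2}$) may or may not satisfy $\lambda_T(w)\le\lfl\lambda_{\max,T}(w)/2\rfl$ after both sides increase by $1$, and this depends delicately on the parity of $\lambda_{\max,T_i}(w)$; isolating the clean statement ``the number of vertices that change status is $0$ or the whole leg below some point'' is where care is needed. Once that lemma is in hand, the three parts fall out by reading off the equality cases.
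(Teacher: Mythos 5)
Your outline is a plan rather than a proof, and the step you yourself flag as delicate --- the threshold-shift bookkeeping when passing from $u_i$-rooted levels to $v_0$-rooted levels --- is exactly where the argument currently has a gap. If you only use the easy containment that the part of $\floor{T/2}$ inside a child subtree $T_i$ is contained in $\floor{T_i/2}$, you get $e(\floor{T/2})\leq\sum_i\bigl(1+e(\floor{T_i/2})\bigr)$ and $e(T)=\sum_i\bigl(1+e(T_i)\bigr)\geq\sum_i\bigl(1+2e(\floor{T_i/2})\bigr)$, which only yields $e(T)-2e(\floor{T/2})\geq -k$; this does not even give part (1). To make the induction close, you must prove the quantitative version of your ``push across the threshold'' lemma: for each non-leaf child $u_i$, either $e(T_i)-2e(\floor{T_i/2})\geq 1$, or (by the inductive part (2)) $T_i$ is a nonempty even spider, in which case every maximal path below $u_i$ has even length and the shift $\lambda\mapsto\lambda+1$, $\lambda_{\max}\mapsto\lambda_{\max}+1$ strictly lowers the floor threshold, so at least one edge of $\floor{T_i/2}$ is lost, compensating the $+2$ from the edge $v_0u_i$. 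That lemma, plus the matching equality analysis for (2) and (3), is the whole content of the proposition and is not carried out in your write-up. Two of your stated claims are also wrong as written: a child does not contribute ``exactly $2e(\floor{T_i/2})+1$ or $+2$'' to $e(T)$ (it contributes $e(T_i)+1$, which can exceed $2e(\floor{T_i/2})+2$ by an arbitrary amount), and $\floor{T/2}=\ceil{T/2}$ is \emph{not} equivalent to ``no root-to-leaf path has odd length'': for the tree with path $v_0abw$ ($w$ a leaf at level $3$) where $b$ also has a grandchild at level $4$, one checks $\floor{T/2}=\ceil{T/2}=\{v_0,a,b\}$ despite the odd root-to-leaf path.

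For comparison, the paper proves the proposition without any re-rooting or parity case analysis: it exhibits an explicit injection $\iota$ from $E(\floor{T/2})$ into $E(T)\setminus E(\ceil{T/2})$ (reflecting each edge across the midpoint of a maximal path through it), arranged so that edges at the root map to edges at leaves; combined with the identity $\abs{E_2}-\abs{E_1}=\sum_{v\neq v_0}\max(\deg(v)-2,0)$, this gives $e(T)\geq e(\ceil{T/2})+e(\floor{T/2})+\abs{E_2}-\abs{E_1}$, from which (1)--(3) follow by reading off when equality can hold (spiders are handled by direct computation). The paper does remark that an inductive proof along your lines exists, so your route is viable; but as it stands the key lemma is only conjectured, and until you prove the even-spider-child loss of at least one floor edge (and trace equality through it for (2) and (3)), the proposal does not establish the statement.
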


\begin{proof} Let $v_0$ be the root of $T$. We write $E_1$ for the set of edges incident to $v_0$ but not a leaf and $E_2$ for the set of edges incident to leaves of $T$ but not $v_0$. The key inequality which will enable us to conclude all three properties is
\begin{equation}\label{eq:floorceil}
    e(T)-e(\ceil{T/2})-\abs{E_2}\geq e(\floor{T/2})-\abs{E_1}.
\end{equation}
To prove \eqref{eq:floorceil}, we present an injection $\iota$ from $E(\floor{T/2})$ to $E(T)\setminus E(\ceil{T/2})$ which maps $E_1\setminus E_2$ to $E_2\setminus E_1$. The idea is that $\iota$ should reflect edges in about the midpoint of maximal paths from $v_0$ in $T$ (e.g.\ in the special case where $T$ is a path $v_0\cdots v_\ell$ rooted at $T$, the map $\iota$ should map $v_{i-1}v_i$ to $v_{\ell-i}v_{\ell-i+1}$). We expand on this intuition in the next two paragraphs.

Let $e$ be an edge in $\floor{T/2}$, and let $v$ and $w$ be the endpoints of $e$, chosen so that $w$ is a descendant of $v$. Set $d:=\lambda(w)$ and $\ell:=\lambda_{\max}(w)$, and let $v_0v_1\cdots v_\ell$ be a path from $v_0$ to an arbitrary maximum-level descendant of $w$, so that $v_d=w$ and $v_{d-1}=v$. Set $\iota(e)=v_{\ell-d}v_{\ell-d+1}$. Because $w\in V(\floor{T/2})$, we have $d\leq \ell/2$. In particular,
\[\ell-d+1\geq\ell/2+1>\ceil{\ell/2}\geq d.\]
This implies firstly that $v_{\ell-d+1}$ is a descendant of $w=v_d$, and thus $\lambda_{\max}(v_{\ell-d+1})=\ell$. We conclude that $\ell-d+1>\ceil{\lambda_{\max}(v_{\ell-d+1})/2}$, yielding that $v_{\ell-d+1}\not\in V(\ceil{T/2})$. Therefore the edge $v_{\ell-d}v_{\ell-d+1}$ lies in $E(T)\setminus E(\ceil{T/2})$. Furthermore, if $e\in E_1$ then $d=1$ and $\ell>1$, so the edge $\iota(e)=v_{\ell-1}v_\ell$ is incident to the vertex $v_\ell$ and thus in $E_2$.

We must now argue that $\iota$ is injective, which we do by providing a procedure $\pi$ which reverses the computation of $\iota$. Indeed, given an edge $e':=vw\in E(T)\setminus E(\ceil{T/2})$ with $\lambda(v)<\lambda(w)$, we can recover $\ell=\lambda_{\max}(w)$ and $d=\ell-\lambda(v)$. As $e'\not\in E(\ceil{T/2})$, we have $\ceil{\ell/2}<\ell-d+1$, or equivalently $\floor{\ell/2}\geq d$. Let $v_0v_1\cdots v_\ell$ be a path from $v_0$ to an arbitrary descendant of $w$ of maximum level, and let $\pi(e')=v_{d-1}v_d$. Since $d\leq\floor{\ell/2}$, we have $\pi(e')\in E(\floor{T/2})$. 
Finally, if $e'\in E_2$ then $\ell>1$ and $\lambda(v)=\ell-1$. This means that $d=1$, and so $\pi(e')$ is incident to $v_0$ and thus in $E_1$.  It is clear from the definitions of $\iota$ and $\pi$ that we have $\pi\circ\iota=\operatorname{id}_{E(\floor{T/2})}$. This means that $\iota$ is injective; in particular, the restriction of $\iota$ to $E(\floor{T/2})\setminus E_1$ as a map to $E(T)\setminus(E(\ceil{T/2})\cup E_2)$ is injective. We have thus proven \eqref{eq:floorceil}. 

Now, we derive (1), (2), and (3). Firstly, a direct computation shows that, if $T$ is a spider, then $e(T)-2e(\floor{T/2})$ counts the number of odd-length legs of $T$. Thus (1), (2), and (3) hold when $T$ is a spider. It remains to show that if $T$ is not a spider then $e(T)\geq 2e(\floor{T/2})+1$, and that equality holds only when $\floor{T/2}=\ceil{T/2}$. Both of these properties follow from the claim that
\begin{equation}\label{eq:edge-count}
    \abs{E_2}-\abs{E_1}=\sum_{v\in V(T)\setminus\{v_0\}}\max(\deg (v)-2,0).
\end{equation}
This may be proven by induction on $\abs{V(T)}$. It is true in the base case where $T$ has only a single vertex $v_0$. If $T'$ is formed from $T$ by removing a leaf adjoined to a vertex $w$ which is either $v_0$ or a leaf of $T'$, then each side of \eqref{eq:edge-count} for $T$ is equal to the corresponding expression for $T'$. Otherwise, both sides are greater by exactly one for $T$ as compared to $T'$.

We conclude from \eqref{eq:edge-count} that $\abs{E_2}>\abs{E_1}$ for $T$ a non-spider. Using \eqref{eq:floorceil}, this implies
\[e(T)\geq e(\ceil{T/2})+e(\floor{T/2})+1\geq 2e(\floor{T/2})+1.\]
If equality holds throughout, we must in particular have $e(\ceil{T/2})=e(\floor{T/2})$, and thus $\floor{T/2}=\ceil{T/2}$.
\end{proof}

We remark that \cref{char tree attempt 2} is also provable via induction.

\subsection{Hypercubes and Embeddings}

We now establish some definitions surrounding hypercubes and homomorphisms of trees.

\begin{defn}[Homomorphisms into the hypercube]\label{def:hom-hypercube}
Let $G$ be a properly edge-colored subgraph of the hypercube $Q_\infty$ and let $T$ be a tree with root $v_0$. A homomorphism $\phi\colon T\to G$ is

\begin{defitemize}
    \item \emph{rainbow} if the edges of $T$ are all mapped to edges of different colors in $G$;
    
    \item \emph{distinctly directed on a set of edges} $E\sub E(T)$ if the edges of $\phi(E)$ each have different coordinates; 
    
    \item \emph{distinctly directed} if $\phi$ is distinctly directed on the whole edge set $E(T)$;
    
    \item \emph{doubly distinct} if $\phi$ is both rainbow and distinctly directed; and 
    
    \item \emph{path-distinct} if, for every leaf $v$ of $T$, $\phi$ is distinctly directed on the edge set of the path from $v_0$ to $v$.
\end{defitemize}

We say that a homomorphism $\Phi$ \textit{extends} another homomorphism $\phi$ if the domain of $\Phi$ contains the domain of $\phi$ and if $\Phi$ agrees with $\phi$ on the domain of $\phi$.
\end{defn}

A large portion of our proof is of the following shape: we will start with a subtree $T'$ of the tree $T$ which we seek to embed together with a homomorphism $\phi\colon V(T')\to V(G)$, and we then extend $\phi$ to a rainbow embedding $\Phi\colon V(T)\to V(G)$. The most fundamental example of this is the case when $T$ and $T'$ are paths.

\begin{prop}\label{maya-paths}
Let $P_{n+1}$ denote the path with $n$ edges $v_0v_1\cdots v_n$ and let $n':=\lfl n/2\rfl+1$.  If $G\sub Q_\infty$ is a properly edge-colored graph with $\delta(G)\ge n$ and if $\phi\colon V(P_{n'+1})\to V(G)$ is a doubly distinct homomorphism of $P_{n'+1}=v_0\cdots v_{n'}$, then $\phi$ extends to a rainbow embedding $\Phi$ of $P_{n+1}$.
\end{prop}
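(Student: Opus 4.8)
The plan is to build $\Phi$ by extending $\phi$ one vertex at a time, in the order $v_{n'+1},v_{n'+2},\dots,v_n$, keeping track after each step of the coordinate $d_m$ of each embedded edge $\Phi(v_{m-1}v_m)$. The hypothesis that $\phi$ is doubly distinct says precisely that $d_1,\dots,d_{n'}$ are pairwise distinct and that the colors of the edges $\phi(v_0v_1),\dots,\phi(v_{n'-1}v_{n'})$ are pairwise distinct. (We may assume $n\ge 3$, since otherwise $P_{n'+1}=P_{n+1}$ and $\Phi=\phi$ already works: a distinctly directed homomorphism of a path is injective by \cref{fact:distinct}.) Suppose we have extended $\phi$ to a rainbow homomorphism on $v_0,\dots,v_k$ for some $n'\le k\le n-1$ and wish to choose $\Phi(v_{k+1})$. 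We apply \cref{fact:extend1} with $x=\Phi(v_k)$, with $\xcol$ the set of the $k$ colors used so far, and --- this is the one genuinely delicate choice --- with $\xcor=\{d_{2k-n+1},d_{2k-n+2},\dots,d_k\}$, the coordinates of the most recent $n-k$ embedded edges. The hypotheses of \cref{fact:extend1} hold: $|\xcol|+|\xcor|=k+(n-k)=n\le\delta(G)$, and the edge $\Phi(v_{k-1}v_k)$ is incident to $x$, has a color in $\xcol$, and has coordinate $d_k\in\xcor$. Hence \cref{fact:extend1} supplies an edge $xy\in E(G)$ whose color is not in $\xcol$ and whose coordinate is not in $\xcor$, and we set $\Phi(v_{k+1})=y$. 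Iterating for $k=n',\dots,n-1$ produces a homomorphism $\Phi\colon P_{n+1}\to G$ that is rainbow (every new edge received an unused color) and satisfies
\[d_{m}\notin\{d_{2m-n-1},d_{2m-n},\dots,d_{m-1}\}\qquad\text{for all }m\in\{n'+1,\dots,n\}.\]

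\textbf{Injectivity.} It remains to show $\Phi$ is an embedding, for which I would use \cref{fact:distinct}. Fix $0\le i<j\le n$ and set $L:=j-i$. Call an index $m\in\{i+1,\dots,j\}$ \emph{redundant} if $d_m=d_{m'}$ for some $m'\in\{i+1,\dots,m-1\}$; then the number of distinct coordinates among $d_{i+1},\dots,d_j$ is exactly $L$ minus the number of redundant indices. If $m$ is redundant, then $m\ge n'+1$ because $d_1,\dots,d_{n'}$ are pairwise distinct; hence the displayed property forces the matching index to satisfy $m'\le 2m-n-2$, and together with $m'\ge i+1$ this yields $m\ge\tfrac{i+n+3}{2}$. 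Therefore the number of redundant indices is at most $j-\lceil\tfrac{i+n+3}{2}\rceil+1=\lfloor\tfrac{2j-i-n-1}{2}\rfloor\le\lfloor\tfrac{L-1}{2}\rfloor$, using $j\le n$ in the last step. Consequently at least $L-\lfloor\tfrac{L-1}{2}\rfloor=\lfloor L/2\rfloor+1>L/2$ distinct coordinates appear among $d_{i+1},\dots,d_j$, so \cref{fact:distinct} gives $\Phi(v_i)\ne\Phi(v_j)$. As $i<j$ were arbitrary, $\Phi$ is injective, hence a rainbow embedding of $P_{n+1}$, completing the argument.

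\textbf{Main obstacle.} The only point requiring care is the calibration of the forbidden coordinate set $\xcor$ at each step. On one hand, $|\xcor|$ must be small enough that $|\xcol|+|\xcor|\le\delta(G)$; since $|\xcol|$ can be as large as $n-1$ while $\delta(G)$ is only assumed to be at least $n$, there is essentially no slack, which is exactly why one must invoke the $r=1$ strengthening \cref{fact:extend1} instead of the cruder $r=0$ form. On the other hand, $\xcor$ must be chosen so that the resulting sequence $d_1,\dots,d_n$ has the property that every window $d_{i+1},\dots,d_j$ contains strictly more than $(j-i)/2$ distinct values, as demanded by \cref{fact:distinct}; forbidding precisely the coordinates of the $n-k$ most recent edges is what threads this needle, with the window ending at the final vertex $v_n$ being the tight case. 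Everything else is routine bookkeeping with floors and ceilings.
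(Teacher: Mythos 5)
Your proposal is correct and follows essentially the same route as the paper: the same greedy extension via \cref{fact:extend1}, forbidding all previously used colors together with the coordinates of the sliding window of the $n-k$ most recent edges, followed by injectivity via \cref{fact:distinct}. The only cosmetic difference is in the injectivity bookkeeping — you count repeated ("redundant") coordinates over the whole window between $v_i$ and $v_j$, while the paper shows the first half of that window already has all-distinct coordinates — and both counts rest on the same window property of the construction.
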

\begin{proof}
Let $\phi$ be a doubly distinct homomorphism defined on $v_0, \ldots, v_{n'}$. For each $i > n'$, we iteratively define $\Phi(v_i)$ for our extension $\Phi$ to be any neighbor of $\Phi(v_{i-1})$ such that the edge $\Phi(v_{i-1})\Phi(v_i)$ does not repeat the color of any previously used edge $\Phi(v_j)\Phi(v_{j-1})$ for $2 \le j \le i - 1$, and also does not repeat the coordinate of any edge in $\Phi(E_i)$ where $E_i:=\{v_jv_{j-1}:2i - n - 1 \le j \le i - 1\}$.  Note that $E_i$ is well-defined, in the sense that $2i-n-1\ge 1$ holds since $i\ge n'+1$.  Moreover, we claim that it is always possible to make such a choice of $\Phi(v_i)$.  Indeed, we aim to do this by applying \Cref{fact:extend1} with $x=\Phi(v_{i-1})$, $\xcol$ the set of colors used by the edges $\Phi(v_j)\Phi(v_{j-1})$ for $2 \le j \le i - 1$, and $\xcor$ the set of coordinates used by $\Phi(E_i)$.  Note that the edge $\Phi(v_{i-1})\Phi(v_{i-2})$ (which exists since $i>n'\ge 1$) has both a color in $\xcol$ and a coordinate in $\xcor$, and that
\[\abs{\xcol}+\abs{\xcor}=i-1+(i-1-2i+n+1+1)=n\le \delta(G).\]
Therefore, \Cref{fact:extend1} shows that there exists a suitable choice for $\Phi(v_i)$ as claimed.

By construction, each edge $\Phi(v_{i-1})\Phi(v_i)$ is assigned a distinct color, so $\Phi$ defines a rainbow homomorphism. It remains to show that $\Phi$ is injective.  Suppose, for contradiction, that $\Phi(v_{k}) = \Phi(v_m)$ for some $k< m$. Since $G$ is bipartite, this implies that $m - k$ is even.  We aim to show that the first $(m-k)/2+1$ edges in the walk
\[\Phi(v_{k})\Phi(v_{k+1})\cdots \Phi(v_m)\]
all use distinct coordinates, from which we can conclude, by \Cref{fact:distinct}, that $\Phi(v_k)\ne \Phi(v_m)$.  To this end, consider any integers $j,i$ with $k+1\le j<i\le (m+k)/2+1$.  If $i\le n'$, then by the hypothesis of $\phi$ being doubly distinct, we have that $\Phi(v_i)\Phi(v_{i-1})=\phi(v_i)\phi(v_{i-1})$ uses a coordinate which is distinct from $\Phi(v_j)\Phi(v_{j-1})=\phi(v_j)\phi(v_{j-1})$. Otherwise, if $i>n'$ then $\Phi(v_i)\Phi(v_{i-1})$ does not have the coordinate of any edge in $E_i$.  Note that we have $\Phi(v_j)\Phi(v_{j-1})\in E_i$ since $j\le i-1$ by definition and since
\[2i-n-1\le m+k+2-n-1\le k+1\le j.\]
We conclude that $\Phi(v_i)\Phi(v_{i-1})$ and $\Phi(v_j)\Phi(v_{j-1})$ have distinct coordinates for all $k+1\le j<i \le (m+k)/2+1$, proving that $\Phi(v_k)\ne \Phi(v_m)$ for all $k,m$ by \Cref{fact:distinct}, giving that $\Phi$ is indeed a rainbow embedding. 
\end{proof}

Our main proof relies on breaking up our given tree $T$ into subtrees $T'_1,T'_2,\ldots$, after which we apply the inductive hypothesis to extend partial embeddings $\phi_i$ of each $T'_i$ into embeddings $\Phi_i$ of the entire subtree $T'_i$.  In order to ensure that, say, $\Phi_i$ and $\Phi_j$ do not share any common vertices in their image, we need to impose some extra conditions on the maps $\Phi_i,\Phi_j$ as outlined in the following lemma.

\begin{lem}\label{maya-easy-injectivity}
Suppose $T',T''$ are trees rooted at $v'_0,v''_0$ and let $\Phi'\colon T'\to Q_\infty$ and $\Phi''\colon T''\to Q_\infty$ be homomorphisms such that $\Phi'(v'_0)$ and $\Phi''(v''_0)$ are adjacent in $Q_\infty$.
Suppose additionally that 
\begin{defitemize}
    \item[(a)] $\Phi'$ is path-distinct on $\lcl T'/2\rcl$ and $\Phi''$ is path-distinct on $\lcl T''/2\rcl$,
    \item[(b)] every edge in $\Phi'(\lcl T'/2\rcl)$ has a different coordinate from every edge in $\Phi''(\lcl T''/2\rcl)$, and
    \item[(c)] the coordinate of the edge $\Phi'(v'_0)\Phi''(v''_0)$ differs from the coordinates of all edges in $\Phi'(\lcl T'/2\rcl)$ and $\Phi''(\lcl T''/2\rcl)$.
\end{defitemize}
Then $\Phi'(V(T'))\cap\Phi''(V(T''))=\emptyset$.
\end{lem}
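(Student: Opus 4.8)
The plan is to argue by contradiction using \cref{fact:distinct}. Suppose $\Phi'(u')=\Phi''(u'')$ for some $u'\in V(T')$ and $u''\in V(T'')$. I would form a closed walk $W$ in $Q_\infty$ by concatenating three pieces: the $\Phi'$-image of the path in $T'$ from $u'$ up to $v_0'$; the single edge $\Phi'(v_0')\Phi''(v_0'')$; and the $\Phi''$-image of the path in $T''$ from $v_0''$ down to $u''$. Since $\Phi',\Phi''$ are homomorphisms and $\Phi'(v_0'),\Phi''(v_0'')$ are adjacent, $W$ is a walk, and it is closed because its endpoints are $\Phi'(u')=\Phi''(u'')$. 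Its length is $m:=\lambda(u')+1+\lambda(u'')$, with levels measured in $T'$, $T''$ respectively. By \cref{fact:distinct}, it then suffices to exhibit strictly more than $m/2$ distinct coordinates among the edges of $W$.

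The crux is to pin down enough edges of $W$ whose coordinates are controlled, and the key point is that although the full $v_0'$-to-$u'$ path in $T'$ need not lie in $\lcl T'/2\rcl$, a long enough initial segment does. Concretely, I would fix a descendant $w'$ of $u'$ with $\lambda(w')=\lambda_{\max}(u')=:\ell'$ and follow the path $v_0'=p_0,p_1,\dots,p_{\ell'}=w'$, so that $p_{\lambda(u')}=u'$. Every $p_i$ has $w'$ as a descendant, hence $\lambda_{\max}(p_i)\ge\ell'$, so $p_i\in V(\lcl T'/2\rcl)$ whenever $i\le\lcl \ell'/2\rcl$. Setting $a:=\min(\lambda(u'),\lcl \ell'/2\rcl)$, the segment $p_0\cdots p_a$ lies entirely in $\lcl T'/2\rcl$; extending it within $\lcl T'/2\rcl$ to a leaf of $\lcl T'/2\rcl$ and invoking (a), its $a$ edges get pairwise distinct coordinates under $\Phi'$, and they all belong to $\Phi'(\lcl T'/2\rcl)$. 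The symmetric argument on the $T''$ side yields $b:=\min(\lambda(u''),\lcl \lambda_{\max}(u'')/2\rcl)$ edges of $W$ with pairwise distinct coordinates, all in $\Phi''(\lcl T''/2\rcl)$. Hypothesis (b) makes these two families coordinate-disjoint from each other, and hypothesis (c) makes the crossing edge $\Phi'(v_0')\Phi''(v_0'')$ coordinate-disjoint from both; hence $W$ uses at least $a+b+1$ distinct coordinates.

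It then remains to verify the inequality $a+b+1>m/2$. Since a vertex is a descendant of itself, $\lambda_{\max}(u')\ge\lambda(u')$, so by monotonicity of the ceiling $\lcl \ell'/2\rcl\ge\lcl \lambda(u')/2\rcl$, giving $a\ge\min(\lambda(u'),\lcl\lambda(u')/2\rcl)=\lcl\lambda(u')/2\rcl\ge\lambda(u')/2$, and likewise $b\ge\lambda(u'')/2$. Therefore $a+b+1\ge\lambda(u')/2+\lambda(u'')/2+1=(\lambda(u')+\lambda(u'')+2)/2>m/2$, and \cref{fact:distinct} yields $\Phi'(u')\ne\Phi''(u'')$, a contradiction. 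The degenerate cases $u'=v_0'$ or $u''=v_0''$ are handled by allowing $a$ or $b$ to equal $0$; if both hold then $W$ is the single crossing edge and \cref{fact:distinct} (or just adjacency of $\Phi'(v_0')$ and $\Phi''(v_0'')$) applies directly.

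I expect the main obstacle to be the reasoning in the second paragraph: one cannot control the whole root-to-$u$ path, only its initial segment of length $\lcl\lambda(u')/2\rcl$, and the proof works only because this, together with the single extra crossing edge, clears the $m/2$ threshold with exactly no slack to spare. A secondary point needing care is the bookkeeping that the $a$ controlled edges on the $T'$ side and the $b$ on the $T''$ side genuinely lie in $\Phi'(\lcl T'/2\rcl)$ and $\Phi''(\lcl T''/2\rcl)$, so that (b) and (c) are applicable and the three edge families are pairwise coordinate-disjoint rather than merely internally so.
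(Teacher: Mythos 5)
Your proposal is correct and follows essentially the same route as the paper: form the walk through the crossing edge $\Phi'(v_0')\Phi''(v_0'')$, observe that the initial (roughly half-length) segments of the two root-to-vertex paths lie in $\lcl T'/2\rcl$ and $\lcl T''/2\rcl$, and use (a)--(c) to count at least $\lcl\lambda(u')/2\rcl+\lcl\lambda(u'')/2\rcl+1>m/2$ distinct coordinates before invoking \cref{fact:distinct}. Your detour through a deepest descendant $w'$ and the quantity $a=\min(\lambda(u'),\lcl\ell'/2\rcl)$ is just slightly more explicit bookkeeping for the same fact the paper uses implicitly.
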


\begin{proof} 
Let $v'_0v'_1\cdots v'_\ell$ and $v''_0v''_1\cdots v''_m$ be any two paths in $T'$ and $T''$ respectively, ending at vertices at levels $\ell\geq 0$ and $m\geq 0$. To show that $\Phi'(V(T'))$ and $\Phi''(V(T''))$ are distinct, it suffices to show that $\Phi'(v'_\ell)\neq\Phi''(v''_m)$ for any such pair of paths.  By hypothesis, the first $\ceil{\ell/2}$ edges of the walk $\Phi'(v'_0)\cdots\Phi'(v'_\ell)$ and the first $\lcl m/2\rcl$ edges of the walk $\Phi''(v''_0)\cdots\Phi''(v''_\ell)$ together with $\Phi'(v_0')\Phi''(v''_0)$ comprise at least $\ceil{\ell/2}+\ceil{m/2}+1>\frac{\ell+m+1}2$ distinct coordinates. Applying \cref{fact:distinct} to this walk gives the result.

\end{proof}

\section{Proof of Main Theorem}\label{sec:main}
Our main technical goal for this section is to prove \Cref{patch-main-thm}, which informally says that any doubly distinct homomorphism $\phi\colon \floor{T/2}\to G$ can be extended to a rainbow embedding $T\to G$.  This, together with the strategy outlined in \cref{remark:dd-embedding} to construct a double distinct homomorphism on at most half the edges of $T$, yields our main result \cref{main theorem}.

As alluded to around \Cref{char tree attempt 2}, the most delicate case for our argument is when $T$ is a spider with at most one odd-length leg.  We handle this special case first via the following result.

\begin{lem}\label{maya-spiders}
Let $S$ be a spider with root $v_0$, let $L_1,\ldots,L_k$ be the legs of $S$, and let $\ell_i:=e(L_i)$ be the length of the $i$th leg for each $1\leq i\leq k$. Suppose $\ell_2,\ldots,\ell_k$ are even, and let $e_1$ be the edge of $L_1$ not in $\lfl S/2\rfl$ which is closest to the root. Then every doubly distinct homomorphism $\phi$ from $\lfl S/2\rfl$ or $\lfl S/2\rfl\cup\{e_1\}$ to a properly edge-colored graph $G\sub Q_\infty$ with $\delta(G)\ge e(T)$ extends to a rainbow embedding $\Phi\colon S\to G$.
\end{lem}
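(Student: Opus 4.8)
The plan is to extend $\phi$ one leg at a time, each time pushing the already-embedded initial segment of a leg out to its leaf by the one-edge-at-a-time argument behind \cref{maya-paths}, while choosing the forbidden-coordinate sets carefully enough to also force injectivity \emph{between} distinct legs. First I would reduce to the case that $\phi$ is doubly distinct on $\lfl S/2\rfl\cup\{e_1\}$: if $\phi$ is given only on $\lfl S/2\rfl$, one further application of \cref{fact:extend1} (forbidding all colours and all coordinates used by $\phi(\lfl S/2\rfl)$, which is permitted because $\delta(G)\ge e(S)\ge 2e(\lfl S/2\rfl)$ by \cref{char tree attempt 2}(1)) embeds $e_1$ with a fresh colour and a fresh coordinate, and we replace $\phi$ by this extension. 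Write $h_i$ for the number of edges of $L_i$ contained in $\lfl S/2\rfl\cup\{e_1\}$, so that $h_1=\lfl\ell_1/2\rfl+1$ and $h_i=\ell_i/2$ for $i\ge 2$; legs with $h_i=\ell_i$ are already embedded and are ignored.

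Now process $L_1$ first and then $L_2,\dots,L_k$ in any order. To process a leg $L_i$, I extend $\phi$ along $L_i$ one vertex at a time from its level-$h_i$ endpoint out to its leaf. Writing $e_{i,t}$ for the edge of $L_i$ at level $t$, the step embedding $e_{i,t}$ applies \cref{fact:extend1} at the current endpoint — with the just-embedded edge $e_{i,t-1}$ serving as the required witness — forbidding: (i) every colour used so far; (ii) the coordinates of the edges $e_{i,s}$ with $\max(1,2t-\ell_i-1)\le s\le t-1$, exactly the window used in the proof of \cref{maya-paths}; and (iii), only when $t=h_i+1$ and $i\ge2$, the coordinates of all edges of $\lfl S/2\rfl\cup\{e_1\}$ lying on $L_i$ or on a leg not yet processed. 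Using $\sum_i h_i=e(\lfl S/2\rfl)+1$ and $2h_i=\ell_i$ for $i\ge2$, a short computation shows the number of forbidden colours plus coordinates is always at most $e(S)\le\delta(G)$ — and equals $e(S)$ exactly at the first extension step of each leg $L_i$ with $i\ge2$, which is why rule (iii) must be applied there (while few colours are in use) and not, say, at the leaf. Hence \cref{fact:extend1} applies at every step, $\Phi$ is rainbow by (i), and $\Phi$ is defined on all of $V(S)$.

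It remains to check injectivity, and by \cref{fact:distinct} it is enough to show that for distinct $a,b\in V(S)$ the edges of the $a$--$b$ path in $S$ carry strictly more than $\tfrac{1}{2}\dist_S(a,b)$ distinct coordinates. If $a,b$ lie on one leg this follows from rule (ii) by the argument of \cref{maya-paths}, and if both lie in $\lfl S/2\rfl\cup\{e_1\}$ it follows because $\phi$ is distinctly directed there. If $a\in L_i$ and $b\in L_j$ with $L_i\ne L_j$, then the first $\min(\lambda(a),h_i)$ edges of $L_i$ and the first $\min(\lambda(b),h_j)$ edges of $L_j$ already contribute that many distinct coordinates to the walk, and one checks this strictly exceeds $\tfrac{1}{2}\dist_S(a,b)$ unless each of $a,b$ is either $v_0$ or a leaf of a leg of index at least $2$ (the legs with $\ell_i=2h_i$). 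In those remaining cases the missing coordinate is supplied either by the enlarged core of $L_1$, when $L_1$ is one of the legs involved, or — when the legs involved have index at least $2$ — by rule (iii) applied to whichever of them was processed first, which makes its first extension edge carry a coordinate outside all the core coordinates appearing on the path.

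I expect the real difficulty to be exactly this cross-leg analysis together with the bookkeeping that makes rule (iii) affordable: it is what dictates the form of rule (iii), and it explains why the statement singles out $L_1$ and allows $e_1$ to be pre-embedded or not — the inflated core of $L_1$, of size $\lfl\ell_1/2\rfl+1>\tfrac{1}{2}\ell_1$, already resolves every problematic pair that involves $L_1$, so no special edge is needed on that leg.
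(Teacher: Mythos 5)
Your proposal is correct, and its core mechanisms are the same as the paper's --- a fresh-coordinate edge just past the midpoint of each leg, sliding coordinate windows along each leg, and coordinate-counting via \cref{fact:distinct} for injectivity --- but the architecture is genuinely different. The paper inducts on the number of legs: it detaches $L_1$, extends it by \cref{maya-paths} inside a subgraph from which all colors and coordinates of the remaining cores have been deleted, recurses on the even sub-spider (whose Step~0 plays exactly the role of your rule~(iii)), and gets cross-leg disjointness from \cref{maya-easy-injectivity}. You instead unroll this into a single non-inductive greedy pass with a global color/coordinate budget, redoing the \cref{maya-paths} window argument inline and replacing \cref{maya-easy-injectivity} by a direct case analysis of vertex pairs; I checked your budget claim (total forbidden $\le e(S)$, with equality exactly at the first extension step of each even leg, where the witness edge $e_{i,t-1}$ makes \cref{fact:extend1} applicable) and your classification of the problematic pairs (both endpoints being $v_0$ or leaves of even legs), and both are right. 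What the paper's route buys is modularity --- the budget verification is hidden in the hypotheses $\delta(G')\ge e(L_1)$, $\delta(G'')\ge e(S')$ and in \cref{maya-paths}, and the injectivity bookkeeping is packaged once in \cref{maya-easy-injectivity}, which is reused heavily in \cref{patch-main-thm}; what yours buys is a self-contained argument with the quantitative tightness visible explicitly. Two small points to make explicit in a write-up: in rule~(iii), ``legs not yet processed'' must exclude legs that are already fully embedded (e.g.\ $L_1$ when $\ell_1\le 2$), since counting their cores there would push the budget above $e(S)$ (and their cores are never needed, as pairs involving $L_1$ are resolved by its enlarged core); and in the initial embedding of $e_1$, the witness edge for \cref{fact:extend1} exists unless $\lfl S/2\rfl$ is empty (the degenerate case $k=1$, $\ell_1=1$), where one instead applies \cref{fact:extendr} with $r=0$.
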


\begin{proof}
We prove the result by induction on $k$ with the $k=1$ case following from \cref{maya-paths}.  In the remainder, suppose $k>1$.   For ease of reading, we divide our remaining proof into three parts: we first propose an algorithm for extending a given doubly distinct homomorphism $\phi$ to a homomorphism $\Phi\colon S\to G$; we then verify that each step of the algorithm is indeed doable; and finally, we verify that the resulting map $\Phi$ is a rainbow embedding. 

It is clearer to specify our embedding by iteratively mapping edges, rather than vertices, of $S$ into $G$; whenever the algorithm defines $\Phi$ on an edge $e=vw$ with $\lambda(v)<\lambda(w)$, it is implicit that $\Phi(v)$ has previously been defined and we are choosing $\Phi(w)$ among the neighbors of $\Phi(v)$ in $G$ to satisfy certain properties.

\medskip\noindent\textbf{Algorithm Overview}.  Let $\phi$ be a doubly distinct homomorphism mapping $\lfl S/2\rfl$ or $\lfl S/2\rfl\cup\{e_1\}$ to $G$.  We outline a three-step algorithm for extending $\phi$ to a map $\Phi\colon S\to G$.
In what follows, let
$S':=\bigcup_{i=2}^kL_i$
denote the sub-spider of $S$ obtained by deleting the leg $L_1$.

\begin{algsteps}
    \step0{
        Initialize $\Phi=\phi$.
        If $\phi$ is not defined on $e_1$, then define $\Phi(e_1)$ so that $\Phi$ is doubly distinct on $\lfl S/2\rfl\cup \{e_1\}$.}

    \step1{Define $\Phi$ on the remainder of $L_1$ so that $\Phi$ is rainbow on $\lfl S/2\rfl\cup L_1$, no edge in $\Phi(L_1)$ has the same coordinate as any edge in $\Phi(\lfl S'/2\rfl)$, and the restriction of $\Phi$ to $L_1$ is injective.}

    \step2{Define $\Phi$ on the remainder of $S'$ so that $\Phi$ is rainbow on $S$ and injective on $S'$.}

\end{algsteps}

\medskip\noindent\textbf{Verifying Each Step is Possible}.  We now show that each edge of $S$ can be embedded as specified in the algorithm.

\begin{algsteps}
    \step0{Suppose $\phi$ is not defined on $e_1$. Write $e_1=vw$ with $\lambda(w)=\lambda(v)+1$. To embed $e_1$, we aim to apply \cref{fact:extend1} to $x=\phi(v)$ with $\xcol$ and $\xcor$ the sets of colors and coordinates, respectively, of the edges in $\phi(E(\lfl S/2\rfl))$. 
   Note that $\abs{\xcol}+\abs{\xcor}=2e(\lfl S/2\rfl)\leq e(S)\leq\delta(G)$ by \cref{char tree attempt 2}. Moreover, there is an edge $vv'\in E(\lfl S/2\rfl)$ incident to $e_1$ (either $\lambda(v)>0$ and $v$ has a neighbor at level $\lambda(v)-1$ in $\lfl L_1/2\rfl$ or $\lambda(v)=0$ and $v=v_0$ has a neighbor in the nonempty spider $\lfl S'/2\rfl$) and its image $\phi(vv')$ has color in $\xcol$ and coordinate in $\xcor$. Applying \cref{fact:extend1} yields a neighbor $y$ of $\phi(v)$ in $G$ such that $\phi(v)y$ has neither color in $\xcol$ or coordinate in $\xcor$, and we define $\Phi(w)=y$.
    }

    \step1{Let $G'$ be the subgraph of $G$ formed by deleting edges whose color or coordinate is used by some edge in $\phi(\lfl S'/2\rfl)$. Again applying \cref{char tree attempt 2}, we have that
        \[\delta(G')\geq\delta(G)-2e(\lfl S'/2\rfl )\geq e(S)-e(S')=e(L_1).\] 
        Let $\phi'\colon (\lfl L_1/2\rfl\cup\{e_1\})\to G$ be the restriction of $\Phi$  to those edges of $L_1$ already mapped; because $\Phi$ has thus far been defined to be rainbow, the image of $\phi'$ lies in $G'$. Applying \cref{maya-paths}, we may extend $\phi'$ to a rainbow embedding $\Phi'\colon L_1\to G'$. Define $\Phi$ on the remainder of $L_1$ to agree with $\Phi'$; noting that the resulting $\Phi$ is thus injective on $L_1$. Additionally, by definition of $G'$, the map $\Phi$ is rainbow on $\lfl S/2\rfl\cup L_1$, and no edge of $\Phi(L_1)\subseteq G'$ has the same coordinate as any edge in $\Phi(\lfl S'/2\rfl)$.
    }
    
    \step2{
        Let $G''$ be the subgraph of $G$ formed by deleting edges whose color is used by some edge in $\Phi(L_1)$. Observe that \[\delta(G'')\geq \delta(G)-e(L_1)\geq e(S)-e(L_1)=e(S').\] Let $\phi''$ be the restriction of $\Phi$ (or equivalently, $\phi$) to $\lfl S'/2\rfl$; its image is contained in $G''$. Applying the inductive hypothesis to the map $\phi''\colon \lfl S'/2\rfl\to G''$, we obtain a rainbow embedding  $\Phi''\colon S'\to G''$ extending $\phi''$. Define $\Phi$ on the remainder of $S'$ to agree with $\Phi''$.  By construction, $\Phi$ is injective on $S'$ and rainbow on $S$.
    }
\end{algsteps}

\medskip\noindent\textbf{Analyzing the Resulting Map}.  Let $\Phi\colon S\to G$ be the homomorphism obtained as a result of this algorithm.  As $\Phi$ is rainbow by construction, it remains only to check that $\Phi$ is an embedding. Because $\Phi$ is injective when restricted to either $S'$ or $L_1$, we need only show that $\Phi(V(L_1)\setminus\{v_0\})$ and $\Phi(V(S'))$ are disjoint sets of vertices.

Let $v_1$ be the child of $v_0$ in $L_1$, and let $L'_1:=S(v_1)$; that is, $L'_1$ is the subtree of $S$ induced by $V(L'_1)=V(L_1)\setminus\{v_0\}$ with root $v_1$.  We aim to apply \cref{maya-easy-injectivity} to the trees $T'=L'_1$ and $T''=S'$ with associated homomorphisms $\Phi'$ and $\Phi''$ given by the restrictions of $\Phi$ to $L'_1$ and $S'$, respectively. First, observe that $E(\lcl L'_1/2\rcl)\cup\{v_0v_1\}=E(\lfl L_1/2\rfl)\cup\{e_1\}$, as both sets comprise the first $\lfl\lambda_{\max}(v_1)/2\rfl+1$ edges of $L_1$, and that $\lcl S'/2\rcl=\lfl S'/2\rcl$. Using these equalities, Step 0 asserts that $\Phi$ is distinctly directed on the edge set
\[E(\lcl L'_1/2\rcl)\cup E(\lcl S'/2\rcl)\cup \{v_0v_1\}=E(\lfl L_1/2\rfl)\cup E(\lfl S'/2\rcl)\cup\{e_1\}=E(\lfl S/2\rfl)\cup\{e_1\}\]
and conditions (a)--(c) of \cref{maya-easy-injectivity} immediately follow. Thus, \cref{maya-easy-injectivity} implies that $\Phi(V(L'_1))$ and $\Phi(V(S'))$ are disjoint, completing the proof that $\Phi$ is an embedding.

\end{proof}

We now prove the main technical result of this paper, in which we show how to extend a partial doubly distinct homomorphism $\phi$ to a full rainbow embedding $\Phi$ provided that the original homomorphism $\phi$ is defined on some carefully chosen set $E\sub E(T)$ of roughly half the edges of $T$.  Our proof follows a delicate inductive argument, and for technical reasons, we require a statement slightly stronger than our ultimate conclusion.  Specifically, we prove by induction that our embedding $\Phi$ can be chosen to avoid a specified ``bad vertex.'' We use this when applying the inductive hypothesis to a subtree of $T$ to ensure that the embedding of the subtree avoids the image of the root $v_0$.

\begin{thm}\label{patch-main-thm}
Let $T$ be a nonempty tree with root $v_0$ and $G$ a properly edge-colored subgraph of the hypercube with $\delta(G)\geq e(T)$. Suppose $\phi\colon\floor{T/2}\to G$ is a doubly distinct homomorphism. Let $\zbad$ be a neighbor of $\phi(v_0)$ in $Q_\infty-E(G)$, with the property that $\phi$ maps no edge of $\floor{T/2}$ to an edge with the same coordinate as edge $\phi(v_0)\zbad$. Then $\phi$ extends to a rainbow embedding of $T$ into $G-\zbad$ which is path-distinct  on $\ceil{T/2}$. 

\end{thm}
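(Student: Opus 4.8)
The plan is to argue by induction on $\abs{V(T)}$. The base case $\abs{V(T)}=1$ is immediate: there are no edges to embed, $\phi(v_0)$ is the entire image, and $\zbad\ne\phi(v_0)$ since adjacent vertices of the bipartite graph $Q_\infty$ are distinct; the conclusions about being rainbow and path-distinct on $\ceil{T/2}$ are then vacuous.

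For the inductive step I would first handle the case that $T$ is a spider with at most one odd-length leg; by \cref{char tree attempt 2} this is precisely the case $e(T)=2e(\floor{T/2})$ together with the case $e(T)=2e(\floor{T/2})+1$ with $\floor{T/2}\ne\ceil{T/2}$. Here $\phi$ meets the hypotheses of \cref{maya-spiders}, so I would run its algorithm; one checks that after Step~0 the resulting map is doubly distinct on $\floor{T/2}\cup\{e_1\}\supseteq\ceil{T/2}$, hence path-distinct on $\ceil{T/2}$, and that Steps~1--2 preserve this. To also avoid $\zbad$, in the one-odd-leg case I would additionally forbid the coordinate of $\phi(v_0)\zbad$ when Step~0 picks $\Phi(e_1)$, which keeps the number of forbidden colours and coordinates at $2e(\floor{T/2})+1=e(T)\le\del(G)$; in the even spider case $\ceil{T/2}=\floor{T/2}$ already avoids that coordinate by hypothesis. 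Either way no edge of $\Phi(\ceil{T/2})$ repeats the coordinate of $\phi(v_0)\zbad$, so every walk formed by $\phi(v_0)\zbad$ followed by a root path of $T$ uses strictly more than half as many distinct coordinates as it has edges, whence by \cref{fact:distinct} it is non-closed and $\Phi$ avoids $\zbad$.

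In the complementary case \cref{char tree attempt 2} provides slack: $\floor{T/2}=\ceil{T/2}$, or $e(T)\ge 2e(\floor{T/2})+2$. I would then peel $T$ apart at the root. Write $v_1,\dots,v_d$ for the children of $v_0$ and process them one at a time, maintaining a partial embedding that starts from $\phi$. A leaf-child $v_i$ I would embed directly: having committed a colour set $\xcol$ and a coordinate set $\xcor$ so far, pick $\Phi(v_i)$ to be a $G$-neighbour of $\phi(v_0)$ whose edge avoids $\xcol$ and a suitable subset of $\xcor$ --- namely the committed coordinates that could certify, via a walk through $\phi(v_0)$, a collision of $\Phi(v_i)$ with an already-placed vertex. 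This is possible by \cref{fact:extendr}: each edge at $\phi(v_0)$ already used by $\phi$ or by an earlier piece is simultaneously colour- and coordinate-forbidden, making the parameter $r$ large enough that $\abs{\xcol}+\abs{\xcor}-r<\del(G)$ given the slack above, and $\zbad$ is avoided automatically since $\phi(v_0)\zbad\notin E(G)$. A non-leaf child $v_i$ I would handle by the inductive hypothesis applied to the two strictly smaller rooted trees $T(v_i)$ (root $v_i$) and $T-v_i$ (root $v_0$): the restriction of $\phi$ is doubly distinct on $\floor{(T-v_i)/2}\subseteq\floor{T/2}$ and, after extending $\phi$ across the (few) ``middle'' edges inside $T(v_i)$ using \cref{fact:extend1} --- the $e(T)\ge 2e(\floor{T/2})+2$ slack pays for this, and no extension is needed when $\floor{T/2}=\ceil{T/2}$ --- also on $\floor{T(v_i)/2}=\ceil{T/2}\cap T(v_i)$. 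I would feed each recursion the graph $G$ cut down by deleting all edges whose colour or (carefully chosen) coordinate is already committed, keeping the minimum degree of each cut-down graph at least the edge-count of the corresponding piece, and I would carry the original $\zbad$ into the $T-v_i$ recursion while feeding $\phi(v_0)$ as the bad vertex into the $T(v_i)$ recursion so that $\Phi(T(v_i))$ misses $\phi(v_0)$. Gluing the pieces: the result is rainbow by colour-disjointness, path-distinct on $\ceil{T/2}$ because each piece is path-distinct on its own $\ceil{\cdot/2}$ and the middle edges were added compatibly, and injective (and it avoids $\zbad$) by \cref{maya-easy-injectivity} across the bridge edges $\phi(v_0)\phi(v_i)$ together with \cref{fact:distinct} applied to walks through $\phi(v_0)$ between the leaf-children and the recursive pieces, using exactly the coordinate-disjointness arranged above.

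The main obstacle is the accounting hidden in ``carefully chosen'': one must simultaneously fix the order in which the children of $v_0$ are treated, the committed coordinates forbidden at each leaf-child step and in each cut-down, and the distribution of the middle-edge extensions, so that at every step the working subgraph keeps minimum degree at least the number of edges still to be placed in the current piece --- that is, so that the two sources of slack ($\del(G)-e(T)\ge 0$ and the structural gap from \cref{char tree attempt 2}) are never overspent --- while the forbidden-coordinate sets stay rich enough for \cref{fact:distinct} and \cref{maya-easy-injectivity} to certify injectivity. The configurations where this balance is tightest are trees with $\ceil{T/2}$ much larger than $\floor{T/2}$ (stars and short bushy trees) and spiders with several odd legs, and making the bookkeeping go through for these is the delicate heart of the theorem.
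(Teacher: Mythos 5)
Your high-level plan points in the same general direction as the paper (dispose of spiders via \cref{maya-spiders}, decompose at the root, recurse on cut-down graphs, glue with \cref{maya-easy-injectivity} and \cref{fact:distinct}, pass a bad vertex into the recursion), and your treatment of the spider case is essentially fine. But for the general case there is a genuine gap, and it is exactly the part you defer as ``carefully chosen'' bookkeeping: that bookkeeping is not a routine detail to be filled in later, it is the content of the theorem, and the sequential child-by-child recursion you sketch does not survive it. Concretely: to invoke \cref{maya-easy-injectivity} between two sibling subtrees $T(v_i)$ and $T(v_{i'})$ you need the images of $\lceil T(v_i)/2\rceil$ and $\lceil T(v_{i'})/2\rceil$ to use pairwise distinct coordinates. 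If you fully embed $T(v_i)$ before touching $T(v_{i'})$, the second half of $T(v_i)$ is embedded by a recursive call that you cannot force to be distinctly directed (the inductive statement only gives path-distinctness), and conversely when you later recurse into $T(v_{i'})$ you would have to delete from $G$ all coordinates used by $\lceil T(v_i)/2\rceil$ in addition to all colors used by the already-embedded pieces; the degree budget $\delta(G)\geq e(T)$ only leaves slack $\delta(G)-e(T)+e(T(v_{i'}))$ for the recursion on $T(v_{i'})$, which pays for the colors alone, not for a coordinate class per edge of a sibling's half-tree. Your appeal to the global slack $e(T)\geq 2e(\lfloor T/2\rfloor)+2$ cannot cover this: that slack is a constant $2$, while the coordinate classes you would need to forbid grow with the sizes of the siblings' half-trees (and the number of ``middle'' edges you must add to reach $\lfloor T(v_i)/2\rfloor$ from $\lfloor T/2\rfloor\cap T(v_i)$ is likewise not bounded by a constant; the correct accounting is per child, as in \eqref{eq:AB}--\eqref{eq:ABUnion}, not global).

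The paper's proof resolves exactly this tension with ideas your sketch does not supply: it first extends $\phi$ to a single doubly distinct map on the union $\A\cup\B$ of all the children's half-trees \emph{before} any recursive completion, and the count only closes because each even-spider child contributes $A_i=\{v_0s_i\}\cup E(\lfloor S_i/2\rfloor)\setminus\{e_i\}$ with one edge deliberately omitted (giving $\abs{A_i}=e(S_i)/2$), with that omitted edge $e_i$ and a further edge $f_i$ re-inserted later under weaker constraints; it orders the non-spider children by their slack $e(T_j)-2e(\lfloor T_j/2\rfloor)$ and lets the coordinate restrictions imposed on the $j$th recursion shrink as $j$ grows, so that the degree bound $\delta(G''_j)\geq e(T_j)$ can be verified; and it embeds the last child $T_m$ with no coordinate restrictions at all, relying on the strengthened inductive statement (path-distinctness on $\lceil\cdot/2\rceil$ plus the bad vertex) to recover injectivity against $v_0$ and the other pieces. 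Without an explicit scheme of this kind --- which child goes when, which coordinates are forbidden in each cut-down graph, and a verified inequality at every step --- your proposal restates the difficulty rather than overcoming it, so as written it does not constitute a proof.
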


\begin{proof}

We prove the result by induction on $\lambda_{\max}(v_0)$, which we recall is the maximum level of a leaf of $T$.  The base case of $\lambda_{\max}(v_0)=1$ corresponds to $T$ being a star, for which the result is trivial.  Suppose that $T$ is some rooted tree with $\lambda_{\max}(v_0)\geq 2$ and that \cref{patch-main-thm} holds for all trees with lesser values of $\lambda_{\max}$ at the root.  

Similarly to \Cref{maya-spiders}, we divide our proof into four parts. First, we introduce necessary notation. Next, we outline an algorithm to extend $\phi$ to a rainbow homomorphism $\Phi\colon T\to G$. We then verify that each step of the algorithm outline is actually possible --- i.e., that each edge of $T$ can be embedded as claimed in the algorithm outline --- with the aid of \cref{fact:extend1,fact:extendr}, \cref{maya-spiders}, and the inductive hypothesis. Lastly,  we prove that the resulting homomorphism $\Phi\colon T\to G$ is a rainbow embedding of $T$ that is path-distinct on $\ceil{T/2}$, relying heavily on \cref{maya-easy-injectivity}.

\medskip\noindent
\textbf{Notation.}
Partition the vertices of $T$ at level 1 (i.e., the children of $v_0$) into three classes. Let $u_1,\ldots,u_{\ell}$ be the leaves of $T$ that are at level 1; let $s_1,\ldots,s_{k}$ be the vertices at level 1 for which $T(s_i)$ is a nonempty spider with even leg lengths; and let $t_1,\ldots,t_m$ be the remaining vertices at level 1 sorted so that, writing $T_i:=T(t_i)$, we have 
\[
1\le e(T_1)-2e(\lfl T_1/2\rfl)\leq e(T_2)-2e(\lfl T_2/2\rfl)\leq\cdots\leq e(T_m)-2e(\lfl T_m/2\rfl).
\]%
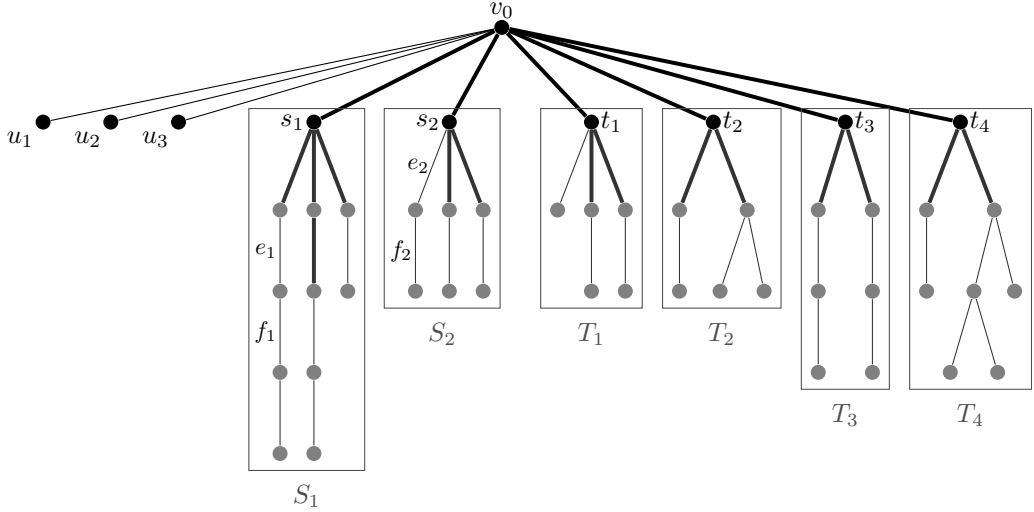
\begin{figure}
    \centering
    
    \tikzset{
        exstyle1/.style={
            scale=0.9,
            baseline=-2.5cm,
            vtx/.style={circle, fill=black, draw=none, inner sep=2pt},
            gvtx/.style={circle, fill=black!50, draw=none, inner sep=2pt},
        }
    }
    
    \begin{tikzpicture}[exstyle1, 
        edgelabel/.style={left=-2pt,pos=0.5,scale=0.9}]
        
        \path (0,0) coordinate[vtx] (u1) node[below left] {$u_1$}
            --++ (1,0) coordinate[vtx] (u2) node[below left] {$u_2$}
            --++ (1,0) coordinate[vtx] (u3) node[below left] {$u_3$}
            --++ (2,0) coordinate[vtx] (s1) node[left] {$s_1$}
            --++ (2,0) coordinate[vtx] (s2) node[left] {$s_2$}
            --++ (2.1,0) coordinate[vtx] (t1) node[right] {$t_1$}
            --++ (1.8,0) coordinate[vtx] (t2) node[right] {$t_2$}
            --++ (1.95,0) coordinate[vtx] (t3) node[right] {$t_3$}
            --++ (1.7,0) coordinate[vtx] (t4) node[right] {$t_4$};
        \path ($(u1)!0.5!(t4)$) --++ (0,1.4) coordinate[vtx] (v0) node[above] {$v_0$};

        \path (s1) --++ (-0.5,-1.3) coordinate[gvtx] (s1a1)
                    --++ (0.5,0) coordinate[gvtx] (s1b1)
                    --++ (0.5,0) coordinate[gvtx] (s1c1)
            (s1a1) --++ (0,-1.2) coordinate[gvtx] (s1a2)
                    -- (s1a2-|s1b1) coordinate[gvtx] (s1b2)
                    -- (s1a2-|s1c1) coordinate[gvtx] (s1c2)
            (s1a2) --++ (0,-1.2) coordinate[gvtx] (s1a3)
                    -- (s1a3-|s1b1) coordinate[gvtx] (s1b3)
            (s1a3) --++ (0,-1.2) coordinate[gvtx] (s1a4)
                    -- (s1a4-|s1b1) coordinate[gvtx] (s1b4);
        \path (s1a1) -- (s1a2) node[edgelabel] {$e_1$}
                    -- (s1a3) node[edgelabel] (s1node) {$f_1$};
        \path (s2) -- (s2|-s1a1) --++ (-0.5,0) coordinate[gvtx] (s2a1)
                    --++ (0.5,0) coordinate[gvtx] (s2b1)
                    --++ (0.5,0) coordinate[gvtx] (s2c1)
            (s2a1|-s1a2) coordinate[gvtx] (s2a2)
                    -- (s2a2-|s2b1) coordinate[gvtx] (s2b2)
                    -- (s2a2-|s2c1) coordinate[gvtx] (s2c2);
        \path (s2) -- (s2a1) node[edgelabel] {$e_2$}
                    -- (s2a2) node[edgelabel] (s2node) {$f_2$};

        \path (t1) -- (t1|-s1a1) --++ (-0.5,0) coordinate[gvtx] (t1a1)
                    --++ (0.5,0) coordinate[gvtx] (t1b1)
                    --++ (0.5,0) coordinate[gvtx] (t1c1)
            (t1a1|-s1a2) coordinate (t1a2)
                    -- (t1a2-|t1b1) coordinate[gvtx] (t1b2)
                    -- (t1a2-|t1c1) coordinate[gvtx] (t1c2)
            (t1a1|-s1a3) coordinate (t1a3);

        \path (t2) --++ (-0.5,0|-s1a1) coordinate[gvtx] (t2a1)
                    --++ (1,0) coordinate[gvtx] (t2b)
                    -- (t2b|-s1a2) --+ (-0.4,0) coordinate[gvtx] (t2ba)
                                    --+ (0.25,0) coordinate[gvtx] (t2bb)
                    -- (t2a1|-s1a2) coordinate[gvtx] (t2a2)
                    ;

        \path (t3) -- (t3|-s1a1) --+ (-0.4,0) coordinate[gvtx] (t3a1)
                    --+ (0.4,0) coordinate[gvtx] (t3b1)
            (t3a1|-s1a2) coordinate[gvtx] (t3a2)
            (t3b1|-s1a2) coordinate[gvtx] (t3b2)
            (t3a1|-s1a3) coordinate[gvtx] (t3a3)
            (t3b1|-s1a3) coordinate[gvtx] (t3b3);
        
        \path (t4) --++ (-0.5,0|-s1a1) coordinate[gvtx] (t4a1)
                    --++ (1,0) coordinate[gvtx] (t4b)
                    -- (t4a1|-s1a2) coordinate[gvtx] (t4a2)
                    -- (t4b|-s1a2) --+ (-0.3,0) coordinate[gvtx] (t4ba)
                                    --+ (0.3,0) coordinate[gvtx] (t4bb)
                    -- (t4ba|-s1a3) --+ (-0.35,0) coordinate[gvtx] (t4baa)
                                    --+ (0.35,0) coordinate[gvtx] (t4bab);
                    ;

        \foreach \x in {u1,u2,u3} {\draw (v0) -- (\x);}
        \foreach \x in {s1,s2,t1,t2,t3,t4} {\draw[line width=1.5pt] (v0) -- (\x); }
        
        \foreach \x/\y in 
        {
         s1/{s1a1,s1b1,s1c1},   s1a1/s1a2, s1a2/s1a3, s1a3/s1a4,
                                s1b1/s1b2, s1b2/s1b3, s1b3/s1b4,
                                s1c1/s1c2,
         s2/{s2a1,s2b1,s2c1},   s2a1/s2a2, s2b1/s2b2, s2c1/s2c2,
         t1/{t1a1,t1b1,t1c1},   t1b1/t1b2, t1c1/t1c2,
         t2/{t2a1,t2b}, t2a1/t2a2,  t2b/{t2ba,t2bb},
         t3/{t3a1,t3b1},t3a1/t3a2,t3a2/t3a3,t3b1/t3b2,t3b2/t3b3,
         t4/{t4a1,t4b}, t4a1/t4a2,  t4b/{t4ba,t4bb},    t4ba/{t4baa,t4bab}%
         } {
            \foreach \z in \y {
                \draw[black!80] (\x) -- (\z);
            }
        }
        \foreach \x/\y in 
        {
         s1/{s1a1,s1b1,s1c1}, s1b1/s1b2,
         s2/{s2b1,s2c1}, 
         t1/{t1b1,t1c1},
         t2/{t2a1,t2b},
         t3/{t3a1,t3b1},
         t4/{t4a1,t4b}%
         } {
            \foreach \z in \y {
                \draw[line width=1.5pt, black!80] (\x) -- (\z);
            }
        }

        \begin{scope}[black!70]
            \foreach \a/\b/\c/\d/\lab in {
                s1/s1node/s1a4/s1c1/S_1, s2/s2node/s2a2/s2c2/S_2,
                t1/t1a1/t1c2/t1c2/T_1, t2/t2a1/t2bb/t2bb/T_2, 
                t3/t3a1/t3b3/t3b1/T_3, t4/t4a2/t4bab/t4bb/T_4
            } {
                \draw ($(\a-|\b) + (-0.25,0.2)$) rectangle ($(\c-|\d) + (0.25,-0.25)$);
                \path (\c-|\b) -- (\c-|\d) node[pos=0.5, below=0.3] {$\lab$};
            }
        \end{scope}
    \end{tikzpicture}
    
    \caption{A tree with vertices, edges, and subtrees labeled according to the notation used in the proof of \cref{patch-main-thm}. In bold are the edges of $\A\cup\B$.
    }
    \label{fig:notation}
\end{figure}%

Here we emphasize that $1\leq e(T_i)-2e(\lfl T_i/2\rfl)$ holds for each $1\leq i\leq m$ by \Cref{char tree attempt 2}: if $e(T(v))=2e(\lfl T(v)/2\rfl)$, then either $v$ is a leaf of $T$ or $T(v)$ is a nonempty spider with even leg lengths. 

For $1\leq i\leq k$, set $S_i:=T(s_i)$ and let $L_i$ be an arbitrary leg of $S_i$. Let $\lambda_i$ be half the length of $L_i$, and enumerate the vertices of $L_i$ as $s_i,s^{(i)}_1,\ldots,s^{(i)}_{2\lambda_i}$ in order. 
Let $e_i$ and $f_i$ be the edges $e_i:=s^{(i)}_{\lambda_i-1}s^{(i)}_{\lambda_i}$ and $f_i:=s^{(i)}_{\lambda_i}s^{(i)}_{\lambda_i+1}$. 
Note that because $S_i$ is nonempty, this path has length at least 2, and hence both $e_i$ and $f_i$ exist.

For $1\leq i\leq k$, let $A_i:=\{v_0s_i\}\cup E(\lfl S_i/2\rfl)\setminus\{e_i\}$. For $1\leq j\leq m$, let $B_j:=\{v_0t_j\}\cup E(\lfl T_j/2\rfl)$. We notate
\[\A:=\bigcup_{i=1}^kA_i
\quad\text{and}\quad
\B:=\bigcup_{j=1}^mB_j.\]
For convenience, define $\A_{\geq i}:=\bigcup_{i'=i}^kA_i$ and $\A_{>i}:=\bigcup_{i'=i+1}^kA_i$, and define $\B_{\geq j}$ and $\B_{>j}$ similarly.
We shall frequently use the observations that 
\begin{equation}\abs{A_i}=e(S_i)/2
    \quad\text{and}\quad
    2\abs{B_j}=2+2e(\lfl T_j/2\rfl)\leq 1+e(T_j),\label{eq:AB}\end{equation}
which together imply
\begin{equation}\label{eq:ABUnion}
    \abs*{\A\cup\B}=\abs*{\bigcup_{i=1}^kA_i\cup\bigcup_{j=1}^mB_j}\le \sum_{i=1}^k\frac{e(S_i)}2+\sum_{j=1}^m\frac{e(T_j)+1}2=\frac {e(T)-k-\ell}2.
\end{equation}

Let $\coox$ be the coordinate of the edge $\phi(v_0)\zbad$ in $Q_\infty-E(G)$.

\medskip
Having fully introduced our notation, we may outline an algorithm for constructing the desired rainbow extension of $\phi$.

\medskip\noindent
\textbf{Algorithm Overview.} Let $\phi\colon \floor{T/2}\to G$ be a doubly distinct homomorphism as described in the hypothesis of the theorem. We outline an algorithm to extend $\phi$ to a rainbow homomorphism $\Phi\colon T\to G$. As in the proof of \cref{maya-spiders}, we specify $\Phi$ by iteratively mapping edges, rather than vertices, of $T$ into $G$.
For an illustration of the algorithm, we refer the reader to \Cref{fig:steps}, which depicts a tree with every edge marked with the step at which it is embedded.

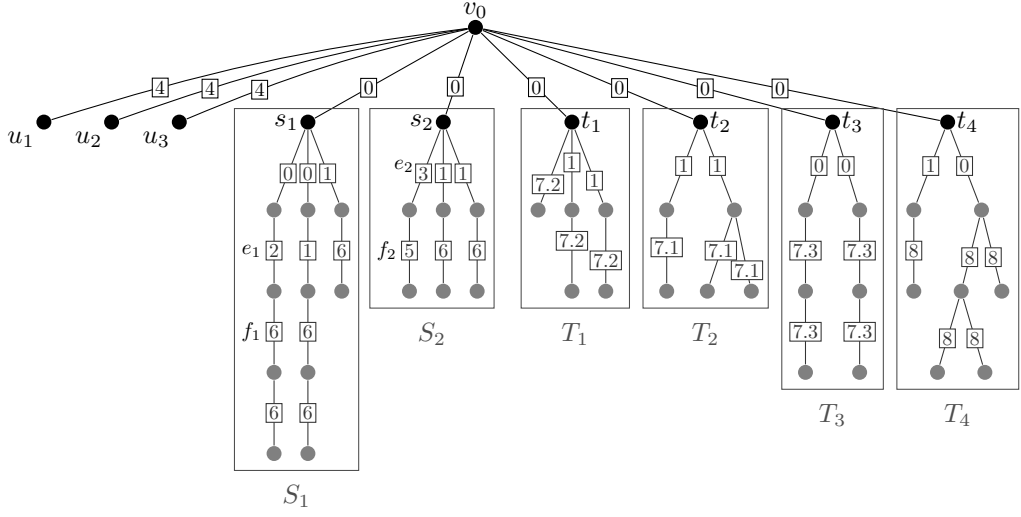
\begin{figure}
    \centering
    
    \tikzset{
        exstyle1/.style={
            scale=0.9,
            baseline=-2.5cm,
            vtx/.style={circle, fill=black, draw=none, inner sep=2pt},
            gvtx/.style={circle, fill=black!50, draw=none, inner sep=2pt},
        }
    }
    
    \begin{tikzpicture}[exstyle1, scale=1,
        edgelabel/.style={pos=0.5,scale=0.75, draw=black!80, inner sep=1.5pt, fill=white},
        eflabel/.style={pos=0.5,left=2pt,scale=0.8}]
        
        \path (0,0) coordinate[vtx] (u1) node[below left] {$u_1$}
            --++ (1,0) coordinate[vtx] (u2) node[below left] {$u_2$}
            --++ (1,0) coordinate[vtx] (u3) node[below left] {$u_3$}
            --++ (1.9,0) coordinate[vtx] (s1) node[left] {$s_1$}
            --++ (2.0,0) coordinate[vtx] (s2) node[left] {$s_2$}
            --++ (1.9,0) coordinate[vtx] (t1) node[right] {$t_1$}
            --++ (1.9,0) coordinate[vtx] (t2) node[right] {$t_2$}
            --++ (1.95,0) coordinate[vtx] (t3) node[right] {$t_3$}
            --++ (1.7,0) coordinate[vtx] (t4) node[right] {$t_4$};
        \path ($(u1)!0.5!(t4)$) --++ (-0.3,1.4) coordinate[vtx] (v0) node[above] {$v_0$};

        \path (s1) --++ (-0.5,-1.3) coordinate[gvtx] (s1a1)
                    --++ (0.5,0) coordinate[gvtx] (s1b1)
                    --++ (0.5,0) coordinate[gvtx] (s1c1)
            (s1a1) --++ (0,-1.2) coordinate[gvtx] (s1a2)
                    -- (s1a2-|s1b1) coordinate[gvtx] (s1b2)
                    -- (s1a2-|s1c1) coordinate[gvtx] (s1c2)
            (s1a2) --++ (0,-1.2) coordinate[gvtx] (s1a3)
                    -- (s1a3-|s1b1) coordinate[gvtx] (s1b3)
            (s1a3) --++ (0,-1.2) coordinate[gvtx] (s1a4)
                    -- (s1a4-|s1b1) coordinate[gvtx] (s1b4);
        \path (s1a1) -- (s1a2) node[eflabel] {$e_1$}
                    -- (s1a3) node[eflabel] (s1node) {$f_1$};
        \path (s2) -- (s2|-s1a1) --++ (-0.5,0) coordinate[gvtx] (s2a1)
                    --++ (0.5,0) coordinate[gvtx] (s2b1)
                    --++ (0.5,0) coordinate[gvtx] (s2c1)
            (s2a1|-s1a2) coordinate[gvtx] (s2a2)
                    -- (s2a2-|s2b1) coordinate[gvtx] (s2b2)
                    -- (s2a2-|s2c1) coordinate[gvtx] (s2c2);
        \path (s2) -- (s2a1) node[eflabel,left] {$e_2$}
                    -- (s2a2) node[eflabel] (s2node) {$f_2$};

        \path (t1) -- (t1|-s1a1) --++ (-0.5,0) coordinate[gvtx] (t1a1)
                    --++ (0.5,0) coordinate[gvtx] (t1b1)
                    --++ (0.5,0) coordinate[gvtx] (t1c1)
            (t1a1|-s1a2) coordinate (t1a2)
                    -- (t1a2-|t1b1) coordinate[gvtx] (t1b2)
                    -- (t1a2-|t1c1) coordinate[gvtx] (t1c2)
            (t1a1|-s1a3) coordinate (t1a3);

        \path (t2) --++ (-0.5,0|-s1a1) coordinate[gvtx] (t2a1)
                    --++ (1,0) coordinate[gvtx] (t2b)
                    -- (t2b|-s1a2) --+ (-0.4,0) coordinate[gvtx] (t2ba)
                                    --+ (0.25,0) coordinate[gvtx] (t2bb)
                    -- (t2a1|-s1a2) coordinate[gvtx] (t2a2)
                    ;

        \path (t3) -- (t3|-s1a1) --+ (-0.4,0) coordinate[gvtx] (t3a1)
                    --+ (0.4,0) coordinate[gvtx] (t3b1)
            (t3a1|-s1a2) coordinate[gvtx] (t3a2)
            (t3b1|-s1a2) coordinate[gvtx] (t3b2)
            (t3a1|-s1a3) coordinate[gvtx] (t3a3)
            (t3b1|-s1a3) coordinate[gvtx] (t3b3);
        
        \path (t4) --++ (-0.5,0|-s1a1) coordinate[gvtx] (t4a1)
                    --++ (1,0) coordinate[gvtx] (t4b)
                    -- (t4a1|-s1a2) coordinate[gvtx] (t4a2)
                    -- (t4b|-s1a2) --+ (-0.3,0) coordinate[gvtx] (t4ba)
                                    --+ (0.3,0) coordinate[gvtx] (t4bb)
                    -- (t4ba|-s1a3) --+ (-0.35,0) coordinate[gvtx] (t4baa)
                                    --+ (0.35,0) coordinate[gvtx] (t4bab);
                    ;

        \foreach \x/\y/\i/\pos in 
        {
         s1/{s1a1,s1b1}/0/0.6, s1/s1c1/1/0.6,
            s1a1/s1a2/2/0.5, s1a2/s1a3/6/0.5, s1a3/s1a4/6/0.5,
            s1b1/s1b2/1/0.5, s1b2/s1b3/6/0.5, s1b3/s1b4/6/0.5,
            s1c1/s1c2/6/0.5,
         s2/s2a1/3/0.6, s2/{s2b1,s2c1}/1/0.6, 
            s2a1/s2a2/5/0.5, s2b1/s2b2/6/0.5, s2c1/s2c2/6/0.5,
         t1/t1a1/7.2/0.75, t1/t1b1/1/0.45,t1/t1c1/1/0.7,
            t1b1/t1b2/7.2/0.35, t1c1/t1c2/7.2/0.65,
         t2/{t2a1,t2b}/1/0.5, t2a1/t2a2/7.1/0.45,
            t2b/t2ba/7.1/0.5, t2b/t2bb/7.1/0.8,
         t3/{t3a1,t3b1}/0/0.5, 
            t3a1/t3a2/7.3/0.5, t3a2/t3a3/7.3/0.5, 
            t3b1/t3b2/7.3/0.5, t3b2/t3b3/7.3/0.5,
         t4/t4a1/1/0.5,t4/t4b/0/0.5, t4a1/t4a2/8/0.5,  t4b/{t4ba,t4bb}/8/0.6,    t4ba/{t4baa,t4bab}/8/0.6%
         } {
            \foreach \z in \y {
                \draw[black!80] (\x) -- (\z) node[edgelabel, pos=\pos] (\x\z) {\i};
            }
        }

        
        \foreach \x in {s1,s2,t1,t2,t3,t4} {
            \draw (v0) -- (\x) node[edgelabel, pos=0.65, draw=black] {0};
        }
        \foreach \x in {u1,u2,u3} {
            \draw (v0) to[bend right=5]  node[edgelabel, pos=0.75, draw=black] {4} (\x);
        }

        \path (t1c1) --++ (0.1,0) coordinate (t1right)
            (t2a1) --++ (-0.1,0) coordinate (t2left)
            (t3a1) --++ (-0.1,0) coordinate (t3left)
            (t3b1) --++ (0.1,0) coordinate (t3right);

        \begin{scope}[black!70]
            \foreach \a/\b/\c/\d/\lab in {
                s1/s1node/s1a4/s1c1/S_1, s2/s2node/s2a2/s2c2/S_2,
                t1/t1a1/t1c2/t1right/T_1, t2/t2left/t2bb/t2bb/T_2, 
                t3/t3left/t3b3/t3right/T_3, t4/t4a2/t4bab/t4bb/T_4
            } {
                \draw ($(\a-|\b) + (-0.25,0.2)$) rectangle ($(\c-|\d) + (0.25,-0.25)$);
                \path (\c-|\b) -- (\c-|\d) node[pos=0.5, below=0.3] {$\lab$};
            }
        \end{scope}
    \end{tikzpicture}
    
    \caption{The same tree as \Cref{fig:notation}, with each edge labeled by the step in which it is embedded. Edges present in $\lfl T/2\rfl$ are marked with 0, to indicate that their embedding is initially specified in $\phi$. The symbols 7.1--7.3 correspond to Cases 1--3 of the proof that Step 7 is possible.
    }
    \label{fig:steps}
\end{figure}%

\begin{algsteps}
    
    \step1{
        Initialize $\Phi=\phi$. Define $\Phi$ on the remainder of $\A\cup\B$ so that $\Phi$ is doubly distinct on $\A\cup\B$ and additionally so that no edge in $\Phi(\A\cup\B)$ has coordinate $\coox$. 
    }

    \step2{
        If $k\geq 1$, define $\Phi(e_1)$ to be an edge with a distinct color and coordinate from each edge in $\Phi(\A\cup\B)$ and additionally a distinct coordinate from $\coox$.
    }
    
    \step3{
        If $k\geq 2$, define $\Phi(e_2),\ldots,\Phi(e_k)$ to have pairwise distinct colors, such that for each $2\leq i\leq k$, the edge $\Phi(e_i)$ also avoids the colors and coordinates of $\Phi(\A\cup\B\cup\{e_1\})$.
    }

    \step4{
        Define $\Phi(v_0u_1),\ldots,\Phi(v_0u_\ell)$ in that order such that, for each $1\leq i\leq\ell$, the edge $\Phi(v_0u_i)$ avoids the colors and coordinates of  $\Phi(\A\cup\B\cup\{e_1,\ldots,e_k\})\cup\{v_0u_1,\ldots,v_0u_{i-1}\}$.
    }

    \step5{
        Define $\Phi(f_2),\ldots,\Phi(f_k)$ in that order such that $\Phi(f_i)$ is a distinct color from every edge in
        \[\Phi\left(\A\cup\B\cup\{e_1,\ldots,e_k\}\cup\{v_0u_1,\ldots,v_0u_\ell\}\cup\{f_2,\ldots,f_{i-1}\}\right),\]
        and moreover $\Phi(f_i)$ has a distinct coordinate from every edge in $\Phi\left(\A_{\geq i}\cup\B\cup \{e_i\}\right)$.
    }
    
    \step6{
        Define $\Phi$ on the remainder of $\bigcup_{i=1}^{k}E(S_i)$ so that $\Phi$ is injective on each $S_i$ and, moreover, the edges $\Phi\left(\bigcup_{i=1}^kE(S_i)\cup\{v_0s_1,\ldots,v_0s_k\}\cup\{v_0u_1,\ldots,v_0u_\ell\}\cup \B\right)$ are all different colors. (Note that this is exactly the set of edges embedded thus far).
    }
    
    \step7{
        For $j=1,\ldots,m-1$, define $\Phi$ on the remainder of $E(T_j)$ so that $\Phi$ is injective on $T_j$, so that its restriction to $\lcl T_j/2\rcl$ is path-distinct, and so that no edge of $\Phi(\ceil{T_j/2})$ has the same coordinate as any edge in $\Phi(\{v_0t_j\}\cup \B_{>j})$. Additionally, we require that $\Phi$ never maps any two edges to the same color, i.e., that at the end of step 7 the resulting mapping of $T-(T_m-\lfl T_m/2\rfl)$ is rainbow.
    }
    
    \step8{
        If $m\geq 1$, define $\Phi$ on the remainder of $E(T_m)$ so that $\Phi$ is injective on $\{v_0t_m\}\cup T_m$ and so that the entire resulting homomorphism $\Phi\colon T\to G$ is rainbow.
    }
\end{algsteps}

\medskip

We now verify that each step of the proposed algorithm is indeed possible.

\medskip\noindent
\textbf{Each Step Is Possible.} We show that each edge of $T$ can be embedded as described in the algorithm. To simplify our notation, we say a set of edges $E\subseteq E(T)$ \emph{uses} a given color or coordinate at a given point in the algorithm if $\Phi(e)$ has that color or coordinate for some $e\in E(T)$ on which $\Phi(e)$ has already been defined.

We repeatedly apply \cref{fact:extendr,fact:extend1} in a manner akin to the proof of \cref{maya-spiders}. More formally, suppose we wish to define $\Phi$ on an edge $e=vw$ with $w$ a descendant of $v$; that is, we wish to define $\Phi(w)$ to be a neighbor of $\Phi(v)$ in $G$ such that the edge $\Phi(v)\Phi(w)$ avoids certain sets $\xcol$ and $\xcor$ of forbidden colors and coordinates, respectively. In this setting, we implicitly apply \cref{fact:extendr} or \ref{fact:extend1} to the vertex $x=\Phi(v)$, obtaining a neighbor $y$ of $x$ which is a suitable choice for $\Phi(w)$. To show that there are $r$ edges $xx'\in E(G)$ with color in $\xcol$ and coordinate in $\xcor$, we exhibit $r$ edges $vv'\in T$ whose images under $\Phi$ use distinct colors from $\xcol$ and distinct coordinates from $\xcor$.

\begin{proofsteps}
    \step1{
    Let $e'_1,e'_2,\ldots$ be an arbitrary ordering of the edges in $\A\cup\B\sm E(\floor{T/2})$. For $h=1,2,\ldots$, we define $\Phi(e'_h)$ using \cref{fact:extendr} as described above to avoid the set $\xcol$ of colors used by $E(\lfl T/2\rfl)\cup\{e'_1,\ldots,e'_{h-1}\}$ and the set $\xcor$ comprising of $\coox$ and the coordinates used by $E(\lfl T/2\rfl)\cup\{e'_1,\ldots,e'_{h-1}\}$. We have $\abs{\xcol}+\abs{\xcor}\leq \abs{\A\cup\B}-1+\abs{\A\cup\B}\leq e(T)-1<\delta(G)$, so we may apply the $r=0$ case of \cref{fact:extendr} to define $\Phi(e'_h)$.
    }

    \step2{
        If $k\ge 1$ (which is assumed to hold whenever Step 2 occurs), then $\A\cup\B$ has size at most $\frac 12(e(T)-1)$ by \eqref{eq:ABUnion}. We use \cref{fact:extend1} to define $\Phi(e_1)$ to avoid the set $\xcol$ of colors used by $\A\cup\B$ and the set $\xcor$ comprising $\coox$ and the coordinates used by $\A\cup\B$. We have $\abs{\xcol}+\abs{\xcor}= 2\abs{\A\cup\B}+1\leq e(T)\leq\delta(G)$. Additionally, $e_1$ is incident to some edge of $A_1$, and any such edge uses a color in $\xcol$ and coordinate in $\xcor$. Thus, \cref{fact:extend1} yields a valid choice of $\Phi(e_1)$.
    }

    \step3{
        To embed $e_i$ for $i\geq 2$, apply \cref{fact:extend1} to the set $\xcol$ of colors used by $\A\cup\B\cup\{e_1,\ldots,e_{i-1}\}$ and the set $\xcor$ of coordinates used by $\A\cup\B\cup\{e_1\}$. We have 
        \[\abs{\xcol}+\abs{\xcor}=\abs{\A\cup\B}+(i-1)+\abs{\A\cup\B}+1
        \leq 2\abs{\A\cup\B}+k\leq e(T)\leq\delta(G).
        \]
        Additionally, $e_i$ is incident to some edge of $A_i$, and any such edge uses a color in $\xcol$ and coordinate in $\xcor$, so the hypotheses of \cref{fact:extend1} are satisfied.
    }

    \step4{
        Fix $1\leq i\leq\ell$ and let $\xcol$ and $\xcor$ be the sets of colors and coordinates used by edges in $\A\cup\B\cup\{e_1,\ldots,e_k\}\cup\{v_0u_1,\ldots,v_0u_{i-1}\}$. To embed $\Phi(v_0u_i)$, we require the full generality of \cref{fact:extendr}. By Step 1 and previous iterations of Step 4, the $r=k+i-1$ edges $v_0s_1,\ldots,v_0s_k,v_0u_1,\ldots,v_0u_{i-1}$ use $r$ distinct colors in $\xcol$ and $r$ distinct coordinates in $\xcor$. Applying \eqref{eq:ABUnion} yields
        \[
        \abs{\xcol}+\abs{\xcor}-r\leq 2(\abs{\A\cup\B}+k+i-1)-(k+i-1)
        \leq e(T)-\ell+i-1\leq e(T)-1<\del(G),
        \]
        so the hypothesis of \cref{fact:extendr} is satisfied.
    }

    \step5{
        Suppose $k\geq 2$. To embed $f_i$ for $2\leq i\leq k$, apply \cref{fact:extend1} to the set $\xcol$ of colors used by $\A\cup\B\cup\{e_1,\ldots,e_k\}\cup\{v_0u_1,\ldots,v_0u_\ell\}\cup\{f_2,\ldots,f_{i-1}\}$ and the set $\xcor$ of coordinates used by $\A_{\geq i}\cup\B\cup\{e_i\}$. We have
        {\begin{align*}
        \abs{\xcol}+\abs{\xcor}
        &=\left(\abs{\A}+\abs{\B}+k+\ell+i-2\right)
            +\left(\abs{\A_{\geq i}}+\abs{\B}+1\right)
        \\&=\sum_{i'=1}^{i-1}(\abs{A_{i'}}+2)
            +\sum_{i'=i}^k(2\abs{A_{i'}}+1) +2\abs{\B}+\ell
        \\&\leq\left(\sum_{i'=1}^k(2\abs{A_{i'}}+1)\right)
            +2\abs{\B}+\ell
        =2(\abs{\A\cup\B})+k+\ell\leq e(T)\leq\delta(G),
        \end{align*}}
        where the penultimate inequality uses \eqref{eq:ABUnion}. Additionally, $f_i$ is incident to $e_i$ which uses a color in $\xcol$ and coordinate in $\xcor$, so the hypothesis of \cref{fact:extend1} is satisfied.
    }

    \step6{
        We iteratively define $\Phi$ on the remainder of $E(S_i)$ for $i=1,\ldots,m$. At the $i$th step, let $G'_i$ be the subgraph of $G$ formed by deleting edges of any color already used by some edge in $E(T)\setminus E(S_i)$, noting that 
        \[\delta(G'_i)\geq\delta(G)-e(T)+e(S_i)\geq e(S_i).\] Let $\phi'_i$ be the restriction of $\Phi$ to the edges of $S_i$ that have already been mapped; that is, the domain of $\phi'_i$ is $E(\lfl S_i/2\rfl)$ if $i=1$ and $E(\lfl S_i/2\rfl)\cup\{f_i\}$ if $i\geq 2$. By \cref{maya-spiders}, we may extend $\phi'_i$ to a rainbow embedding $\Phi'_i\colon S_i\to G'_i$. Define $\Phi$ to agree with $\Phi'_i$ on the remainder of $S_i$. Because $\Phi'_i$ is rainbow on $S_i$ and $G'_i$ avoids the colors of all previously mapped edges in $E(T)\setminus E(S_i)$, the map $\Phi$ remains a rainbow homomorphism.
    }

    \step7{
        We handle the analysis of this step depending on whether $e(T_j)-2e(\lfl T_j/2\rfl)=1$ and whether $\lcl T_j/2\rcl=\lfl T_j/2\rfl$,. Sub-trees falling into each case are pictured in \Cref{fig:steps}.   Our proofs primarily rely on applying the inductive hypothesis of the theorem to relevant subtrees of $T$.  By doing this, we  additionally show that $\Phi(T_j)$ is disjoint from $\Phi(v_0)$ in some of our cases, though we emphasize that this is not explicitly required of us to establish the validity of Step 7.

        \medskip\noindent
        Case 1: Suppose $\lcl T_j/2\rcl=\lfl T_j/2\rfl$.
        Step 1 asserts that $\Phi$ is doubly distinct on $\lcl T_j/2\rcl\cup\{v_0t_j\}\cup\B_{>j}=\B_{\geq j}$. In particular, $\Phi$ has already been defined to be path-distinct on $\lcl T_j/2\rcl$ and no edge of $\lcl T_j/2\rcl$ uses the same coordinate as any edge of $\{v_0t_j\}\cup\B_{>j}$. It remains to define $\Phi$ on the remainder of $T_j$ without reusing any color so that the restriction of $\Phi$ to $T_j$ is injective.
        
        Let $G''_j$ be the subgraph of $G$ formed by deleting edges of any color already used by some edge in $E(T)\setminus E(T_j)$ and let $\phi''_j\colon \lfl T_j/2\rfl\to G''_j$ be the restriction of $\Phi$ to those edges of $T_j$ mapped in a prior step. We have $\delta(G''_j)\geq\delta(G)-e(T)+e(T_j)\geq e(T_j)$ and $\Phi(v_0t_j)\notin E(G''_j)$. Moreover, because $\Phi$ is doubly distinct on $\lcl T_j/2\rcl\cup\{v_0t_j\}$, the map $\phi''_j$ is doubly distinct and no edge of $\phi''_j(\lcl T_j/2\rcl)$ has the same coordinate as $\Phi(v_0t_j)$. Thus, the inductive hypothesis applies to extend $\phi''_j$ to a rainbow embedding $\Phi''_j\colon T_j\to G''_j-\Phi(v_0)$. Define $\Phi$ to agree with $\Phi''_j$ on the remainder of $T_j$; the definition of $G''_j$ implies that $\Phi$ reuses no color.

        \medskip\noindent
        Case 2: Suppose $\lcl T_j/2\rcl\neq\lfl T_j/2\rfl$ and $e(T_j)-2e(\lfl T_j/2\rfl)=1$. In this case, $T_j$ is a spider with exactly one leg of odd length by \Cref{char tree attempt 2}. Let $f'_j$ be the middle edge of the odd-length leg, i.e., the unique element of $E(\lcl T_j/2\rcl) \setminus E(\lfl T_j/2\rfl)$. First, we define $\Phi(f'_j)$ to avoid both the set $\xcol$ of all colors previously used as well as the set $\xcor$ of coordinates used by $\B_{\geq j}$. For each $j'\geq j$, exactly $e(T_{j'})-e(\lfl T_{j'}/2\rfl)=e(T_{j'})+1-\abs{B_{j'}}$ edges of $T_{j'}$ remain unembedded, so by \eqref{eq:AB},
        {\[\abs{\xcol}
        =e(T)-\sum_{j'=j}^m\left(e(T_{j'})+1-\abs{B_{j'}}\right)
        \leq e(T)-\sum_{j'=j}^m\abs{B_{j'}}
        =e(T)-\abs{\B_{\geq j'}}
        \leq\delta(G)-\abs{\xcor}.
        \]}
        Moreover, $f'_j$ is incident to some edge of $B_j$, and any such edge uses a color in $\xcol$ and coordinate in $\xcor$. Thus, \cref{fact:extend1} allows us to define $\Phi(f'_j)$ as desired. 
       The map $\Phi$ is doubly distinct on $\B_{\geq j}\cup\{f'_j\}=E(\lcl T_j/2\rcl)\cup\{v_0t_j\}\cup\B_{>j}$, so $\Phi$ is path-distinct on $\lcl T_j/2\rcl$ and no edge of $\lcl T_j/2\rcl$ uses the same coordinate as any edge of $\{v_0t_j\}\cup\B_{>j}$. It remains to define $\Phi$ on the remainder of $T_j$ without reusing any color so that the restriction of $\Phi$ to $T_j$ is injective.

        We follow a similar proof to Case 1. Let $G''_j$ be the subgraph of $G$ formed by deleting edges of any color already used by some edge in $E(T)\setminus E(T_j)$ and let $\phi''_j\colon (\lfl T_j/2\rfl\cup\{f'_j\})\to G''_j$ be the restriction of $\Phi$ to those edges of $T_j$ already mapped. We have $\delta(G''_j)\geq\delta(G)-e(T)+e(T_j)\geq e(T_j)$; furthermore, $\phi''_j$ is doubly distinct on $\lfl T_j/2\rfl$. By \cref{maya-spiders}, the map $\phi''_j$ extends to a rainbow embedding $\Phi''_j\to G''_j$. Define $\Phi$ to agree with $\Phi''_j$ on the remainder of $T_j$; as before, $\Phi$ reuses no color by definition of $G''_j$.

        \medskip\noindent
        Case 3: Suppose $e(T_j)-2e(\lfl T_j/2\rfl)\geq 2$. Let $G''_j$ be the subgraph of $G$ formed by deleting edges of any color already used by some edge in $E(T)\setminus E(T_j)$ or any coordinate that is used by some edge in $\{v_0v_j\}\cup\B_{>j}$. Let $\phi''_j$ be the restriction of $\Phi$ to $\lfl T_j/2\rfl$. Because $\Phi$ never reuses colors and is doubly distinct on $\B_{\geq j}=E(\lfl T_j/2\rfl)\cup\{v_0v_j\}\cup\B_{>j}$, the restriction $\phi''_j$ is a doubly distinct map from $\lfl T_j/2\rfl$ to $G''_j$ that never uses the coordinate of $\Phi(v_0t_j)$. Moreover, $\Phi(v_0t_j)\notin E(G''_j)$ by definition. To apply the inductive hypothesis, it remains to lower-bound $\delta(G''_j)$.
        
        For each $j'>j$, exactly $e(T_{j'})-e(\lfl T_{j'}/2\rfl)$ edges of $T_{j'}$ remain unembedded, and the ordering of $T_1,\ldots,T_m$ implies that
        \[e(T_{j'})-2e(T_{j'})\geq e(T_j)-2e(T_j)\geq 2.\]
        Thus, the number of colors already used by some edge in $E(T)\setminus E(T_j)$ is at most
        {\begin{align*}
        e(T)-e(T_j)-\sum_{j'=j+1}^m(e(T_{j'})-e(\lfl T_{j'}/2\rfl)
        &\leq e(T)-e(T_j)-\sum_{j'=j+1}^m(e(\lfl T_{j'}/2\rfl)+2)
        \\&=e(T)-e(T_j)-\abs{\B_{>j}}-(m-j)\leq e(T)-e(T_j)-\abs{\B_{>j}}-1.
        \end{align*}}
        It follows that $G''_j$ is obtained by deleting edges from at most $(e(T)-e(T_j)-\abs{B_{>j}}-1)+(\abs{B_{>j}}+1)=e(T)-e(T_j)$ classes of colors or coordinates from $G$, so $\delta(G''_j)\geq\delta(G)-(e(T)-e(T_j))\geq e(T_j)$.

        We can thus apply the inductive hypothesis to extend $\phi''_j\colon \lfl T_j/2\rfl\to G''_j$ to a rainbow embedding $\Phi''_j\colon T_j\to G''_j-v_0$ which is path-distinct on $\lcl T_j/2\rcl$. Define $\Phi$ to agree with $\Phi''_j$ on the remainder of $T_j$. By definition of $G''_j$, the resulting $\Phi$ reuses no color and additionally, no edge of $T_j$ uses the same coordinate as any edge in $\{v_0t_j\}\cup\B_{>j}$.
    }
    
    \step8{
        Suppose $m\geq 1$. Let $G''_m$ be the subgraph of $G$ formed by deleting edges of any color used by some edge in $E(T)\setminus E(T_m)$, so $\del(G''_m)\geq\del(G)-e(T)+e(T_m)\geq e(T_m)$. Let $\phi''_m\colon \lfl T_m/2\rfl\to G''_m$ be the restriction of $\Phi$ to those edges of $T_m$ already mapped. Because $\Phi$ is doubly distinct on $B_m=\{v_0t_m\}\cup E(\lfl T_m/2\rfl)$, the restriction $\phi''_m$ is doubly distinct and never uses the coordinate of $\Phi(v_0t_m)$. Moreover, $\Phi(v_0t_m)\notin E(G''_m)$ by definition. Apply the inductive hypothesis to extend $\phi''_m$ to a rainbow embedding $\Phi''_m\to G''_m-v_0$ and define $\Phi$ to agree with $\Phi''_m$ on the remainder of $T_m$. The resulting $\Phi$ is injective on $\{v_0t_m\}\cup T_m$ because $\Phi''_m$ maps $T_m$ injectively to $G''_m-v_0$, and $\Phi$ reuses no color by definition of $G''_m$.
    }
\end{proofsteps}

\medskip\noindent 
\textbf{The Algorithm Terminates in a Rainbow Path-Distinct  Embedding Avoiding $\zbad$.} Let $\Phi\colon T\to G$ be the result of this algorithm. We check each of our required conditions in turn.

\medskip 

\noindent \textit{The map $\Phi$ is a rainbow homomorphism}.  Indeed, every step of our algorithm specifies that the map $\Phi$ constructed is rainbow at the end of that step. So, the fact that $\Phi$ is rainbow has been verified in the previous section.

\medskip

\noindent \textit{The map $\Phi$ is path-distinct on $\ceil{T/2}$}.   It suffices to show the stronger statement that, for each $v\in V(T)$ at level 1, $\Phi$ maps $\{v_0v\}\cup E(\lfl T(v)/2\rfl)$ to edges with distinct coordinates. If $v=s_i$, then this edge set is $A_i\cup\{e_i\}$; we imposed in Steps 1 and 3 that these edges be mapped to edges with different coordinates. If $v=t_j$ then this edge set is $B_j$; we imposed in Step 1 that these edges be mapped in different coordinates. Finally, if $v=u_i$ then this set has size $1$ and there is nothing to show.

\medskip

\noindent \textit{The map $\Phi$ is injective}. Recall that $\Phi$ restricts to an injective map on each of the subtree $S_1,\ldots,S_k,T_1,\ldots,T_m$ by Steps 6, 7, and 8. It remains to show that the images of the substructures (i.e., singleton vertices or subtrees) $v_0,u_1,\ldots,u_\ell,S_1,\ldots,S_k,T_1,\ldots,T_m$ are vertex-disjoint under $\Phi$. 

The remainder of the proof largely follows by repeated applications of \cref{maya-easy-injectivity} to pairs of subtrees $(T',T'')$ of $T$, with the corresponding homomorphisms $(\Phi',\Phi'')$ always assumed to be the restrictions of $\Phi$ to $(T',T'')$. When checking \cref{maya-easy-injectivity}(a), we often use the fact that $\Phi$ is path-distinct on $\ceil{T/2}$ (and thus on any subtree thereof as well).

First we show that, for any $1\leq i\leq\ell$, the map $\Phi$ does not identify $u_i$ with any vertex of $T-u_i$. We apply \cref{maya-easy-injectivity} to $T'=T-u_i$ and $T''=\{u_i\}$. Condition (a) follows because $\Phi$ is path-distinct on $\ceil{T/2}\supseteq\lcl T'/2\rcl$ and condition (b) is trivial. Condition (c) requires that no edge in $\Phi(\lcl T'/2\rcl)$ has the same coordinate as $\Phi(v_0u_i)$. We have $E(\lcl T'/2\rcl)=\A\cup\B\cup\{e_1,\ldots,e_k\}\cup\{v_0u_{i'}:i'\neq i\}$, so this follows from Step 4.
We conclude by \cref{maya-easy-injectivity} that $\Phi$ does not identify $u_i$ with any vertex of $T-u_i$.

Next, we show that $\Phi$ is injective on $\{v_0\}\cup T_1\cup\cdots\cup T_m$. Fix $1\leq j_1<j_2\leq m$; we apply \cref{maya-easy-injectivity} to $T'=T_{j_1}$ and $T''=\{v_0t_{j_2}\}\cup T_{j_2}$. Condition (a) follows because $\Phi$ is path-distinct on $\lcl T'/2\rcl$ by Step 7 and is path-distinct on $\ceil{T/2}\supseteq\lcl T''/2\rcl$. 
For (b) and (c), Step 7 asserts that no edge of $E(\lcl T'/2\rcl)$ receives the same coordinate under $\Phi$ as any edge in $\{v_0t_{j_1}\}\cup E(\lcl T''/2\rcl)=\{v_0t_{j_1}\}\cup B_{j_2}$, and moreover Step 1 asserts that $\Phi$ is distinctly directed on $\{v_0t_{j_1}\}\cup E(\lcl T''/2\rcl)\subseteq\B$. By \cref{maya-easy-injectivity}, it follows that $\Phi$ does not identify any vertex of $T_{j_1}$ with any vertex of $T_{j_2}$ for any $1\leq j_1<j_2\leq m$; moreover, taking $j_2=m$, we see that $\Phi$ does not identify any vertex of $T_{j_1}$ with $v_0$ for $j_1<m$. As $\Phi$ is injective on $\{v_0t_m\}\cup T_m$ by Step 8, we conclude that $\Phi$ is injective on $\{v_0\}\cup T_1\cup\cdots\cup T_m$.

Next, we show that $\Phi$ identifies no vertex of $S_i$ (if $i=1$) or $S_i-s^{(i)}_{2\lambda_i}$ (if $i\geq 2$) with any vertex of $T-s_i$ (which we recall has vertex set $V(T)-V(S_i)$). Fix $1\leq i\leq k$. We apply \cref{maya-easy-injectivity} to $T'=S_i$ (if $i=1$) or $T'=S_i-s^{(i)}_{2\lambda_i}$ (if $i\geq 2$) and $T''=T-s_i$. Condition (a) holds as $\Phi$ is distinctly directed on $E(\lcl T'/2\rcl)\subseteq A_i\cup\{e_1\}$ and path-distinct on $\ceil{T/2}\supseteq\lcl T''/2\rcl$. For (b) and (c), observe that
\[E(\lcl T'/2\rcl)\cup\{v_0s_i\}\subseteq\A\cup\{e_1\}\quad\text{ and }\quad E(\lcl T''/2\rcl)\subseteq \A\cup \B\cup\{e_1,\ldots,e_k\}\cup\{v_0u_1,\ldots,v_0u_\ell\}.\]
Steps 1--4 assert that no edge $e'\in \A\cup\{e_1\}$ receives the same coordinate under $\Phi$ as any $e''\in\A\cup\B\cup\{e_1,\ldots,e_k\}\cup\{v_0u_1,\ldots,v_0u_\ell\}$ if $e'\neq e''$. Conditions (b) and (c) follow, so \cref{maya-easy-injectivity} implies that $\Phi$ identifies no vertex of $S_i$ (if $i=1$) or $S_i-s_{2\lambda_i}^{(i)}$ (if $i\geq 2$) with any vertex of $T-s_i$.

Thus far, we have shown that $\Phi$ identifies each leaf $u_i$ with no other vertex of $T$, that $\Phi$ is injective on $\{v_0\}\cup T_1\cup\cdots\cup T_m$, and that $\Phi$ identifies no vertex of $S_i$ (if $i=1$) or $S_i-s^{(i)}_{2\lambda_i}$ (if $i\geq 2$) with any vertex of $T-s_i$. To complete the proof that $\Phi$ is injective, we need only show that $\Phi$ never identifies any two vertices in the set $\{s^{(i)}_{2\lambda_i}:2\leq i\leq k\}$.

Fix indices $2\leq h<i\leq k$. To show that $\Phi(s^{(i)}_{2\lambda_i})\neq\Phi(s^{(h)}_{2\lambda_h})$, we apply \cref{fact:distinct} to the walk
\[
\Phi(s^{(i)}_{2\lambda_i})\cdots \Phi(s^{(i)}_1)\Phi(s_i)\Phi(v_0)\Phi(s_h)\Phi(s^{(h)}_1)\cdots\Phi( s^{(h)}_{2\lambda_h})\]
in $Q_\infty$, which has length $2\lambda_i+2\lambda_h+2$. Note that all $\lambda_i+\lambda_h+2$ edges of the path 
$$s^{(i)}_{\lambda_i-1}\cdots s^{(i)}_1s_iv_0s_hs^{(h)}_1\cdots s^{(h)}_{\lambda_h+1}$$
are contained in the set $A_i\cup A_h\cup\{e_h,f_h\}$; thus, by Steps 1, 3, and 5, their images under $\Phi$ have distinct coordinates.
It follows by \cref{fact:distinct} that $\Phi$ cannot identify two vertices of the form $s^{(h)}_{2\lambda_h}$ and $s^{(i)}_{2\lambda_i}$, completing the proof that $\Phi$ is injective.

\medskip
\noindent \textit{The homomorphism $\Phi$ maps no vertex to $\zbad$}.  
We first show that most vertices of $T$ do not map to $\zbad$ by applying \cref{maya-easy-injectivity}. The lemma is applied to the tree $T'=T-\{s^{(i)}_{2\lambda_i}:2\leq i\leq k\}$ formed by removing the leaves $s_{2\lambda_2}^{(2)},\ldots,s_{2\lambda_k}^{(k)}$ from $T$, and to a rooted tree $T''$ with one vertex. Let $\Phi'\colon T'\to Q_\infty$ be the restriction of $\Phi$ to $T'$, and let $\Phi''\colon T''\to Q_\infty$ map the single vertex to $\zbad$. Condition (a) of \cref{maya-easy-injectivity} follows because $\Phi$ is path-distinct on $\ceil{T/2}\supseteq\lcl T'/2\rcl$ and condition (b) is trivial because $E(T'')=\emptyset$. Condition (c) requires that no edge of
\[E(\lcl T'/2\rcl)=\A\cup\B\cup\{e_1\}\cup\{v_0u_1,\ldots,v_0u_\ell\}\]
 uses the coordinate $\coox$, which follows from Steps 1, 2, and 4. Thus, the hypotheses of \cref{maya-easy-injectivity} are satisfied, and we conclude that no vertex of $T'$ is mapped to $\zbad$ by $\Phi$.

It remains to show that the vertices of $V(T)\setminus V(T')=\{s_{2\lambda_2}^{(2)},\ldots,s_{2\lambda_k}^{(k)}\}$ are not mapped to $\zbad$. Fix $2\leq i\leq k$ and consider the path $v_0s_is^{(i)}_1\cdots s^{(i)}_{\lambda_i+1}$ of length $\lambda_i+2$, all of whose edges are contained in $A_i\cup\{e_i,f_i\}$.
These edges are mapped to edges with distinct coordinates by Steps 1, 3, and 5.
Thus, applying \cref{fact:distinct} to the walk $\zbad\Phi(v_0)\Phi(s_i)\Phi(s^{(i)}_1)\cdots\Phi(s^{(i)}_{2\lambda_i})$ 
in $Q_\infty$, which has length $2\lambda_i+2$, we conclude that $\zbad\neq\Phi(s^{(i)}_{2\lambda_i})$.
Hence, $\Phi$ maps no vertex of $T$ to $\zbad$.

\end{proof}

Finally, we use \Cref{patch-main-thm} to conclude our main result.

\begin{proof}[Proof of \Cref{main theorem}]
Let $G$ be a properly edge-colored subgraph of $Q_\infty$ and $T$ a tree with $e(T)\leq\delta(G)$. We construct a rainbow embedding $\Phi:T\to G$.

By repeatedly applying \cref{fact:extendr}, it is straightforward to construct a doubly distinct homomorphism $\phi:\lfl T/2\rfl\to G$. Indeed, order the edges of $\lfl T/2\rfl$ as $e_1,e_2,\ldots$ so that $\{e_1,\ldots,e_i\}$ induces a connected subgraph of $T$ for each $1\leq i\leq e(\lfl T/2\rfl)$. Define $\phi(e_1)$ arbitrarily and for $i=2,3,\ldots$, iteratively define $\phi(e_i)$ to avoid the sets $\xcol$ and $\xcor$ of colors and coordinates, respectively, used by any edge in $\phi(\{e_1,\ldots,e_{i-1}\})$. To do so we apply \cref{fact:extendr} as in the proof of \cref{patch-main-thm}; the hypothesis of \cref{fact:extendr} is satisfied for $r=0$ because
\[
\abs{\xcol}+\abs{\xcor}=2(i-1)\leq 2e(\lfl T/2)-2\leq e(T)-2<\delta(G),
\]
with the center inequality using \cref{char tree attempt 2}.
Applying \Cref{patch-main-thm} to the resulting $\phi$ (and choosing $\zbad$ arbitrarily among vertices of $Q_\infty$ satisfying the necessary condition) gives a rainbow embedding of $T$ into $G$, proving the result. 
\end{proof}

\section{Concluding Remarks}\label{sec:con}
In this paper we resolved \Cref{main conjecture} by showing that every proper edge-coloring of $Q_n$ contains a rainbow copy of every tree on at most $n$ edges.  This result, together with Schrijver's Conjecture, naturally motivates the following broad question: given a graph $G$, what trees $T$ are such that every proper edge-coloring of $G$ contains a rainbow copy of $T$?  Perhaps the most natural special case of this question is the following, which can be viewed as a vast generalization of Schrijver's Conjecture.

\begin{quest}\label{general hosts question}
Does there exist some $C>0$ such that for every graph $G$ of minimum degree $d$ and every tree $T$ on at most $d-C$ edges, every proper edge-coloring of $G$ contains a rainbow copy of $T$?
\end{quest}

We can say slightly more to \cref{general hosts question} than what is given by \cref{main theorem}: while \Cref{main theorem} is stated only in terms of subgraphs of the hypercube, the exact same proof can be made to work for a slightly broader class of graphs.  Specifically, we say that a graph $G$ is \textit{matching-separable} if for every edge $xy\in E(G)$, there exists a matching $M_{xy}$ such that $x$ and $y$ lie in distinct connected components of $G-M_{xy}$.  We note that our current proof can be extended to give the following.
\begin{thm}
If $G$ is a matching-separable graph of minimum degree at least $d$, then every proper edge-coloring of $G$ contains a copy of every rainbow tree $T$ on at most $d$ edges.
\end{thm}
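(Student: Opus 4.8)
The plan is to re-run the proof of \cref{main theorem} essentially verbatim, replacing the ambient hypercube $Q_\infty$ by the given matching-separable graph $G$. The hypercube structure is used in only three localized places: the ``at most one neighbour per coordinate'' step inside \cref{fact:extendr,fact:extend1}; the non-closedness principle \cref{fact:distinct}, and hence \cref{maya-easy-injectivity}, which is built on it; and the bipartiteness of $G$, used exactly once, in \cref{maya-paths}. The entire remaining edifice --- the statement and proof of \cref{patch-main-thm}, the eight-step algorithm, the reduction to spiders, and all the colour/coordinate accounting --- never refers to the host graph beyond these three facts, so it transcribes unchanged once suitable substitutes are in place.

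The first step is to fix a dictionary. Matching-separability gives, for each edge $e=xy\in E(G)$, a matching $M_e$ with $x$ and $y$ in distinct components of $G-M_e$; note $e\in M_e$, and if $A_e$ is the component of $x$ in $G-M_e$ then $\delta_G(A_e)\subseteq M_e$ is again a matching separating $x$ from $y$, so we may take $M_e=\delta_G(A_e)$ to be a separating cut that is itself a matching. Now reinterpret ``the coordinate of $e$'' as the matching $M_e$; reinterpret a set $\xcor$ of forbidden coordinates as a finite family of such matchings, with an edge declared forbidden when it lies in $\bigcup_{M\in\xcor}M$; and reinterpret ``$\phi$ is distinctly directed on $E$'' as ``the matchings $\{M_{\phi(e)}:e\in E\}$ are pairwise edge-disjoint'' (in $Q_\infty$ the coordinate classes partition the edges, so distinctness and disjointness coincide there). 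With these conventions \cref{fact:extendr,fact:extend1} go through word for word: each $M\in\xcor$ is a matching, so at most $\abs{\xcor}$ edges at any vertex lie in $\bigcup_{M\in\xcor}M$, which is exactly the bound used in their proofs, and if the selected edge $xy$ avoids $\bigcup_{M\in\xcor}M$ then in particular $M_{xy}\notin\xcor$. Likewise ``path-distinct'', ``doubly distinct'', and the forbidden-colour/coordinate bookkeeping of Steps~1--8 carry over syntactically, the budget inequalities being identical.

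The heart of the matter is the analogue of \cref{fact:distinct}: if $x_0x_1\cdots x_m$ is a walk in $G$ and there is an index $i$ with $x_{t-1}x_t\notin M_{x_{i-1}x_i}$ for every $t\neq i$, then $x_0\neq x_m$. This follows from a potential-function argument: the indicator of $A_{x_{i-1}x_i}$ is a $\{0,1\}$-valued function on $V(G)$ that changes value exactly across the edges of $\delta_G(A_{x_{i-1}x_i})=M_{x_{i-1}x_i}$, so the hypothesis says the walk crosses this cut exactly once (at step $i$) and hence $x_0,x_m$ receive opposite values. In $Q_\infty$ this hypothesis is automatic from ``more than $m/2$ distinct coordinates'' by pigeonhole, precisely because distinct coordinates are disjoint matchings; in general one must ensure, inside each invocation (all of the shape of \cref{maya-easy-injectivity}, applied to the two ``first halves'' and the joining edge, edge-sets that were built up edge by edge in Steps~1--8), that the critical walk really does contain such a privately-matched edge. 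This is where the argument needs genuine new work: while running the algorithm one should choose the separating cuts $M_e$ in concert with the embedding, so that on each edge-set flagged ``distinctly directed'' the $M_e$ are pairwise \emph{disjoint} rather than merely distinct. This is possible because at each application of \cref{fact:extendr}/\cref{fact:extend1} one first fixes the new endpoint $y$ --- exactly as in the original proof, so $xy$ avoids all matchings currently in $\xcor$ --- and only then commits to $M_{xy}$, which can be taken to be a separating cut for $xy$ disjoint from the finitely many previously-placed images lying in the same controlled edge-set, the minimum-degree hypothesis supplying the room. (Equivalently, one enlarges the forbidden sets of Steps~1--8 so that the first-placed edge of every critical walk is avoided by all later edges of that walk, which the budgets still accommodate.) I expect this coherent choice of the cuts $M_e$ --- free in the hypercube, but to be arranged by hand in a general matching-separable host --- to be the one real obstacle.

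Finally, two minor points complete the transcription. Matching-separable graphs need not be bipartite, but bipartiteness enters only in \cref{maya-paths}, and only to conclude that a putative collision $\Phi(v_k)=\Phi(v_m)$ forces $m-k$ to be even; replacing each $(m-k)/2$ by $\lceil(m-k)/2\rceil$ (and $(m+k)/2$ by $\lfloor(m+k)/2\rfloor$) makes the same window argument run for both parities --- indeed the odd case requires strictly fewer distinctly-directed edges. And \cref{maya-easy-injectivity} is proved solely by invoking \cref{fact:distinct}, so once the matching analogue above is in hand it needs no further modification.
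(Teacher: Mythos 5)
Your overall plan --- rerun the proof of \cref{main theorem} with ``the matching $M_{\Phi(e)}$'' playing the role of ``the coordinate of $\Phi(e)$'', noting that a matching forbids at most one neighbour of any vertex so that \cref{fact:extendr,fact:extend1} survive, and that a collision $\Phi(u)=\Phi(v)$ would force the image walk to cross some separating matching exactly once --- is indeed the paper's plan, and you are right to notice that the relation ``$\Phi(e'')\in M_{\Phi(e')}$'' is not symmetric the way shared coordinates are. The gap is in how you propose to close the injectivity step. You want to choose the separating matchings $M_e$ adaptively during the embedding so that on every edge set flagged ``distinctly directed'' they are pairwise disjoint (or at least avoid the previously placed images), with ``the minimum-degree hypothesis supplying the room''. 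Matching-separability gives no such freedom: it is a bare existence statement, one separating matching per edge of $G$, with no control over which other edges that matching contains; for a given new edge $xy$ there may be essentially only one separating matching, and it may contain previously embedded edges or meet previously chosen matchings. The minimum-degree hypothesis is spent, through \cref{fact:extendr}, on choosing the embedded edge $xy$, not on choosing $M_{xy}$. Moreover, even your stated selection rule (take $M_{xy}$ avoiding the finitely many placed images) is weaker than what your pigeonhole needs: the edges that threaten the count are the deeper-half edges, embedded later by recursive calls that forbid only colours, and for them you need each to lie in at most one controlled matching, i.e.\ genuine pairwise disjointness of the matchings, not merely avoidance of the placed edges. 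Your parenthetical fallback --- enlarging the forbidden sets so that every later edge of each critical walk avoids the matching of its first-placed edge --- does not fit the budgets: in Steps 6--8 of \cref{patch-main-thm} the recursive calls happen in subgraphs $G''_j$ whose minimum degree is only guaranteed to be $e(T_j)$ after deleting just the colours used outside $T_j$, so there is no slack left to delete in addition the matchings of the roughly $e(T)/2$ controlled edges; the whole point of the more-than-half counting is that the deeper halves never have to avoid them.

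The paper's argument does none of this re-selection. It keeps the given matchings, replaces ``has the same coordinate as $\Phi(e_i)$'' by ``lies in $M_{\Phi(e_i)}$'' in the forbidden sets (so \cref{fact:extendr} applies verbatim, each $M_{\Phi(e_i)}$ excluding at most one neighbour), and closes injectivity directly from the observation that $\Phi(u)=\Phi(v)$ would force every edge $e'$ of the $u$--$v$ path to have some other path edge mapped into $M_{\Phi(e')}$, combined with the same more-than-half bookkeeping that underlies \cref{fact:distinct}; no coherent choice of cuts, and no disjointness among the matchings, is ever invoked. So the one place where you declare ``genuine new work'' is needed is exactly the place where your proposed fix is unsupported by the hypotheses, and it is not the route the paper takes.
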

Note that every subgraph of the hypercube is matching-separable, since if $xy\in E(G)\sub E(Q_n)$ is an edge whose vertices differ on the $i$th coordinate, then taking $M_{xy}$ to be the set of all edges of $G$ whose vertices differ on the $i$th coordinate gives the desired property.  Thus this result in particular recovers \Cref{main theorem}.  Moreover, there exist matching-separable graphs which are not subgraphs of a hypercube.  Examples of such graphs include odd cycles of length at least 5 (via taking $M_e$ to consist of the two edges incident to $e$), as well as the Petersen graph (via choosing $M_e$ such that $G-M_e$ is the disjoint union of two 5-cycles).
\begin{proof}[Sketch of Proof]
We construct our rainbow embedding $\Phi\colon V(T)\to V(G)$ exactly like we do in the proof of \Cref{main theorem}, except now when choosing the image of some edge $e\in V(T)$, instead of making sure $\Phi(e)$ does not use the same coordinate of some set of previously mapped to edges $\{\Phi(e_1),\ldots,\Phi(e_t)\}$, we instead make sure that $\Phi(e)$ does not lie in any of the matchings $M_{\Phi(e_i)}$.  

Observe that in order for a homomorphism $\Phi$ of $T$ to satisfy $\Phi(u)=\Phi(v)$, it must be the case that for each edge $e'$ in the path from $u$ to $v$ in $T$, some other edge $e''$ in this path satisfies $\Phi(e'')\in M_{\Phi(e')}$, as otherwise $\Phi(u),\Phi(v)$ would be in distinct connected components of $G=M_{\Phi(e')}$ (and hence in particular not be equal to each other).  As such, to guarantee $\Phi(u)\ne \Phi(v)$ it suffices for more than half the edges $e'$ in the path from $u$ to $v$ to satisfy that no other edge in the path maps into $M_{\Phi(e')}$, with this condition being exactly analogous to our proofs of injectivity for the hypercube which relied on \Cref{fact:distinct}.  Moreover, each $M_{\Phi(e')}$ set forbids at most one neighbor for any given vertex of $G$ because $M_{\Phi(e')}$ is a matching, so the exact same proof of \Cref{fact:extendr} goes through if we replace the condititon of $G$ being a subgraph of a hypercube to being matching-separable.
\end{proof}

Another direction, in the spirit of \Cref{directed Schrijver}, is to consider directed graphs.  In particular, our proof can be used to show that any properly edge-colored directed graph $D$ whose underlying graph is a subgraph of $Q_n$ and which has minimum out-degree and in-degree at least $d$ contains a rainbow copy of every directed tree on at most $d$ edges. It is possible to weaken these degree conditions somewhat depending on the structure of $T$. Specifically, if $T$ is a directed tree such that every vertex has in-degree at most 1 (which is equivalent to saying $T$ has some ``root'' vertex and every arc is directed away from the root), then only minimum out-degree at least $d$ is needed for our present proof to work.  It is not clear to us what degree conditions should be necessary in order to guarantee that directed subgraph of $Q_n$ contain rainbow copies of a given tree $T$.

Lastly, it is natural to ask if one can replace the condition of \Cref{main theorem} that $G$ has minimum degree at least $d$ with the condition that it has \textit{average} degree at least $d$.  Indeed, this is the main conjecture that was made in \cite{cks-25} and serves as a rainbow analog of the classic Erd\H{o}s--S\'os Conjecture.  We do not currently know how to adapt our proof to the average degree setting even in relatively simple cases.  In particular, even the following case of this problem remains open for $d\ge 5$.

\begin{conj}
If $G$ is a subgraph of a hypercube $Q_n$ and if $G$ has average degree at least $d$, then every proper edge-coloring of $G$ contains a rainbow copy of the path on $d$ edges.
\end{conj}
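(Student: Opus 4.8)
As the surrounding text already warns, this statement is open for $d\ge 5$, so what follows is a plan of attack rather than a complete argument. The obvious first move is to reduce to the minimum‑degree setting: any graph of average degree at least $d$ contains a nonempty subgraph $G'$ of minimum degree at least $d/2$ (repeatedly delete vertices of degree below $d/2$; the process cannot exhaust all vertices), and \Cref{main theorem} applied to $G'$ yields a rainbow path on $\lfloor d/2\rfloor$ edges. This is off by a factor of two, and the entire difficulty lies in closing that gap. In the uncoloured world the gap is closed by the classical Erd\H{o}s--Gallai theorem, whose standard proof takes a longest path and performs rotations; the plan is therefore to run a rainbow, hypercube‑aware version of that rotation argument, using the geometric input of \Cref{fact:distinct} in place of the ``all vertices of a longest path'' observation.

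Concretely, let $P=x_0x_1\cdots x_\ell$ be a longest \emph{rainbow} path in $G$ and suppose for contradiction that $\ell<d$. Since $P$ cannot be extended at $x_0$ or at $x_\ell$, every edge of $G$ at $x_\ell$ (respectively at $x_0$) whose colour is not already used on $P$ must lead back into $V(P)$. One then wants to rotate in the style of P\'osa: given a chord $x_ix_\ell$ whose colour is not used on $P$, replace $P$ by the rainbow path $x_0\cdots x_ix_\ell x_{\ell-1}\cdots x_{i+1}$ (properness of the colouring guarantees that the colour of $x_ix_\ell$ differs from that of the removed edge $x_ix_{i+1}$, so the new path is again rainbow), and iterate to build a large set of rainbow paths all on the vertex set $V(P)$ with controlled endpoints. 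The hypercube structure enters exactly as in the injectivity arguments of \Cref{patch-main-thm}: the chords used in successive rotations must together involve enough distinct coordinates that the rotated sequences really are paths rather than closed walks, and that an eventual extension does not re‑close, which is precisely the content of \Cref{fact:distinct}. The goal is a counting argument showing that, unless $\ell\ge d$, the number of vertices attainable as a rotation endpoint, weighed against the number of edges forced by the average‑degree hypothesis, exceeds $|V(P)|$, a contradiction.

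The main obstacle is that the average‑degree hypothesis is global whereas every tool developed in this paper is local: \Cref{fact:extendr}, whose driving inequality $|\xcor|+|\xcol|-r<\delta(G)$ is phrased in terms of the \emph{minimum} degree, is applied vertex by vertex. A longest rainbow path may run entirely through low‑degree vertices, with all the excess degree concentrated elsewhere in $G$, so the rotation set need not inherit any of the global density. Overcoming this appears to require either (i) choosing the longest rainbow path, or the vertices one rotates about, so that they capture a linear share of the global degree, or (ii) a genuinely new technical statement in place of \Cref{patch-main-thm} in which the per‑step budget $\delta(G)$ is replaced by a quantity that can be amortised across the whole embedding. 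A secondary obstacle is that properness already rules out the crudest greedy rotations, and the Cayley colouring of $Q_d$ shows that no slack is available (average degree $d$ cannot force a rainbow path on $d+1$ edges), so the rotation/counting argument must be made exactly tight. For small $d\le 4$ the factor‑two loss from the crude reduction is already close to sufficient, and the few remaining cases can be dispatched by a short direct analysis of short rainbow paths in properly edge‑coloured subgraphs of the hypercube.
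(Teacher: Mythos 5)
This statement is not proved in the paper at all: it is posed in the concluding remarks as an open problem, explicitly noted to be unresolved for $d\ge 5$, and your proposal (as you yourself say) is a plan of attack rather than an argument. So the honest verdict is that there is no proof here to certify. The pieces you do commit to are fine as far as they go: the reduction to a subgraph of minimum degree at least $d/2$ is standard and, combined with \Cref{main theorem}, does give a rainbow path on $\lfloor d/2\rfloor$ edges; and your observation that a P\'osa-type rotation preserves rainbowness when the chord's colour is unused on the current path is correct. But everything that would constitute the actual proof is left as an aspiration: the counting statement you aim for (``unless $\ell\ge d$, the number of rotation endpoints exceeds $|V(P)|$'') is never derived, and the central obstruction you correctly identify --- that the average-degree hypothesis is global while maximality of a longest rainbow path only constrains the two endpoint vertices, which may have degree far below $d$ --- is precisely the open difficulty, not a detail to be deferred. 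In the uncoloured Erd\H{o}s--Gallai setting one can pass to a subgraph where every vertex has degree at least $d/2$ and still retain average degree $d$ is false in general, and rotations at a low-degree endpoint produce few chords with unused colours, so the scheme as described has no engine.

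There are also two quieter gaps worth flagging. First, iterated rotations change which colours appear on the path, so the set of admissible chords is not static; a genuine argument must either re-count after every rotation or amortise, and nothing in \Cref{fact:extendr} or \Cref{patch-main-thm} (both of which budget against $\delta(G)$ per vertex) supplies such an amortisation --- your option (ii) is exactly the missing lemma, not a reformulation of existing ones. Second, the closing claim that the cases $d\le 4$ ``can be dispatched by a short direct analysis'' is asserted without any analysis; the paper's remark that the conjecture is open for $d\ge 5$ does not make the small cases yours for free. As it stands, the proposal is a reasonable research outline consistent with the paper's framing of the problem, but it contains no step that closes the factor-of-two gap, and hence no proof of the statement.
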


\subsection*{Acknowledgments}

This material is based upon work supported by the National Science Foundation under Grant No.~DMS-1928930, while the authors were in residence at the Simons Laufer Mathematical Sciences Institute in Berkeley, California, during the semester of Spring 2025.  Crawford is partially supported by National Science Foundation Grant DMS-2152498. Sankar is supported by a Fannie and John Hertz Foundation Fellowship and a National Science Foundation Graduate Research Fellowship Program under Grant No.~DGE-1656518.  Schildkraut is supported by a National Science Foundation Graduate Research Fellowship Program under Grant No.~DGE-2146755.  Spiro is supported by a National Science Foundation Mathematical Sciences Postdoctoral Research Fellowship under Grant No.~DMS-2202730.

\bibliography{literature}
\bibliographystyle{yuval}

\end{document}